\documentclass[reqno,11pt]{amsart}
\usepackage{amsthm,amssymb,latexsym,mathrsfs}
\usepackage{amsmath}
\usepackage{indentfirst}
\usepackage{color}

\usepackage[top=2.5cm,bottom=2.5cm,left=2.5cm,right=2.5cm]{geometry}
\allowdisplaybreaks[1]
\numberwithin{equation}{section}
\newtheorem{theorem}{Theorem}[section]
\newtheorem{defn}[theorem]{Definition}
\newtheorem{lemma}[theorem]{Lemma}
\newtheorem{remark}[theorem]{Remark}
\newtheorem{prop}[theorem]{Proposition}

\allowdisplaybreaks



\begin{document}

\title[Vlasov-Fokker-Planck/Navier-Stokes system]{Global weak solutions to the Vlasov-Poisson-Fokker-Planck-Navier-Stokes system}

\author{Li Chen}
\address{Lehrstuhl f\"{u}r Mathematik IV, Universit\"{a}t Mannheim,
Mannheim 68131, Germany}
\email{chen@math.uni-mannheim.de}

\author{Fucai Li}
\address{Department of Mathematics, Nanjing University,
 Nanjing 210093, P.R. China}
\email{fli@nju.edu.cn}

\author{Yue Li}
\address{Department of Mathematics, Nanjing University,
 Nanjing 210093, P.R. China}
\email{liyue2011008@163.com}

\author{Nicola Zamponi}
\address{Institute for Analysis and Scientific Computing, Vienna University of Technology, Wiedner Hauptstra\ss e 8-10, 1040 Vienna, Austria}
\email{nicola.zamponi@asc.tuwien.ac.at}

%




\begin{abstract}
We consider the compressible Vlasov-Poisson-Fokker-Planck-Navier-Stokes system  in a three dimensional bounded domain with nonhomogeneous Dirichlet boundary conditions.
The system describes the evolution of  charged  particles ensemble dispersed in an isentropic fluid.
For the adiabatic coefficient $\gamma>3/2$, we establish the global existence of weak solutions to this system
with arbitrary large initial and boundary data.
\end{abstract}

\keywords{Vlasov-Poisson-Fokker-Planck equations; Navier-Stokes equations; non-homogeneous boundary condition; weak solutions.}
\subjclass[2010]{35Q35, 82C22, 35D30. }
\maketitle

\section{ Introduction}

\subsection{The model}
An increasing interest in studying the kinetic-fluid models, which describe the evolution of dispersed particles in the fluid, is stimulated by extensive applications, for example, in the modeling of reaction flows of sprays, sedimentation phenomenon, wastewater treatment, sedimentation-consolidation process and rainfall formation, see \cite{BBPS,BBKT,BBE,BWC,FFS,S,SG}.

In this paper, we consider the coupling of the Vlasov-Poisson-Fokker-Planck equations and compressible Navier-Stokes system in three dimensions:
\begin{equation}\label{VNS}
\left\{
\begin{aligned}
  &\partial_tf+v\cdot\nabla_xf+{\rm{div}}_v((u-v)f)-\nabla_x\Phi\cdot\nabla_vf-\Delta_vf=0,\\
  &-\Delta_x\Phi=\int_{\mathbb{R}^3}f\,dv-c(x),\\
  &\partial_t\rho+{\rm{div}}_x(\rho u)=0,\\
  &\partial_t(\rho u)+{\rm{div}}_x(\rho u\otimes u)+\nabla_x\rho^\gamma
  -{\rm{div}}_x\mathbb{S}(\nabla u)=-\int_{\mathbb{R}^3}(u-v)f\,dv,
\end{aligned}
\right.
\end{equation}
where $f = f(t, x, v)\geq 0$ stands for the distribution of particles at time $t>0$ with respect to position $x$
and velocity $v$,
$\Phi(t,x)$ is the internal electric potential of the kinetic system,
and $\rho(t,x)$ and $u(t,x)$ represent the density and velocity of the fluid, respectively.
$\gamma>3/2$ is the adiabatic coefficient,
$c(x)$ is a given background function,
and $\mathbb{S}$ is given by
$$\mathbb{S}(\nabla u)=\mu_1(\nabla_x u+\nabla_x^\top u)+\mu_2{\rm{div}}u\,\mathbb{I},$$
where $\mu_1 $ and $\mu_2$ are coefficients of viscosity satisfying $\mu_1 > 0$ and $2\mu_1 + 3\mu_2 \geq 0$,
and $\mathbb{I}$ is the $3\times3$ identity matrix.

The kinetic equation in \eqref{VNS} describes an interacting large system with $M$ particles whose evolution is driven by the velocity $u$ of the compressible fluid system, which can be understood in the following microscopic mean field structure
\begin{align*}
dX_{M,i}(t)&=V_{M,i}(t)dt, \quad 1\leq i\leq M,\\
dV_{M,i}(t)&=\Big(-u(X_{M,i}(t))+V_{M,i}(t)-\nabla U*c(X_{M,i}(t))+\frac{1}{M}\sum_{j\neq i}^{M} \nabla U(X_{M,i}(t)-X_{M,j}(t))\Big)dt + dW_t^i,
\end{align*}
where $(X_{M,i}(t),V_{M,i}(t))_{1\leq i\leq M}$ are the positions and velocities of a group of $M$ particles, $(W^i_t)_{1\leq i\leq N}$ is a set of independent Brownian motions, and $U$ is the Newtonian potential in three dimensions.
In this particle formulation, the driven force $-u(X_{M,i}(t))+V_{M,i}(t)$ for the $i$-th particle is coupled with the fluid system through the fluid velocity $u$. The rigorous derivation of Vlasov-Poisson-Fokker-Planck through mean field limit was recently shown in \cite{CCS, HLP}. In the last decades, there have been many notable works in the derivation of one particle effective equation through the mean field limit of first (or second) order particle systems (in both deterministic or stochastic setting), for example, \cite{CCH,CDJ,CDHJ,CHJZ,G,J,JW,LP}, to name a few. However, due to the singular coupling of the particle system on the right hand side of the momentum equation, a rigorous derivation of the whole Vlasov-Poisson-Fokker-Planck-Navier-Stokes system is still missing.

We point out that a similar incompressible version of the system \eqref{VNS} was introduced in \cite{AIS14} by   Anoshchenko,  Iegorov, and  Khruslov. And the  existence of global weak solutions to it with homogeneous Dirichlet boundary conditions was obtained by mainly using the modified Galerkin method and Schauder's fixed point
theorem.

The main goal of this paper is to prove existence of weak solutions to the system \eqref{VNS} with given initial and boundary conditions. More precisely, the setting of the problem is the following. Let $\Omega\subset\mathbb{R}^3$ be a given bounded domain with smooth boundary $\partial \Omega$.

We impose the following initial conditions
\begin{equation}\label{1}
\big(f(0,x,v),\rho(0,x),u(0,x) \big)=\big(f_0(x,v),\rho_0(x),u_0(x) \big).
\end{equation}
For the fluid equations \eqref{VNS}$_3$-\eqref{VNS}$_4$ we consider the nonhomogeneous Dirichlet boundary conditions:
\begin{align}
&\rho(t,x)=\rho_B(x),\quad (t,x)\in (0,T)\times\Gamma_{\rm{in}},\\
&u(t,x)=u_B(x),\quad (t,x)\in (0,T)\times\partial\Omega,
\end{align}
and for the kinetic equation\eqref{VNS}$_1$ and the Poisson equation \eqref{VNS}$_2$,  we consider the following inflow boundary conditions:
\begin{align}
 &\gamma^- f(t,x,v)=g(t,x,v),\quad (t,x,v)\in (0,T)\times \Sigma^-,\\
  &\Phi(t,x)=0,\quad (t,x)\in (0,T)\times\partial\Omega,\label{2}
\end{align}
where  $\Gamma_{\rm{in}}:=\{x\in\partial\Omega\,\big|\,u_B\cdot \nu(x)<0\}$ and $\gamma ^{-}f(t,x,v)$ is the trace of $f$, $\Sigma^{-}:=\{(x,v)\in\partial\Omega\times\mathbb{R}^3| v\cdot \nu(x)<0\}$.
Here $\nu(x)$ denotes the outward unit normal vector to $x\in\partial\Omega$.
Furthermore, we use $\gamma^+f$ to denote the trace of $f$ on $(0,T)\times\Sigma^+$ and
$\Sigma^{+}:=\{(x,v)\in\partial\Omega\times\mathbb{R}^3| v\cdot \nu(x)>0\}$.

\subsection{Some previous results}
Kinetic-fluid models have been studied in different contexts depending on, for example, the compressibility of the fluid, the types of friction force and the magnitude of initial data.
In the following, we summarize some results in two categories: compressible kinetic-fluid model and incompressible kinetic-fluid model.
We will restrict ourselves to the kinetic-fluid model with non-density dependent viscosity.

The typical compressible Vlasov-Fokker-Planck-Navier-Stokes system reads:
\begin{equation}\label{s1}
\left\{
\begin{aligned}
  &\partial_tf+v\cdot\nabla_xf+{\rm{div}}_v((u-v)f)-\Delta_vf=0,\\
  &\partial_t\rho+{\rm{div}}_x(\rho u)=0,\\
  &\partial_t(\rho u)+{\rm{div}}_x(\rho u\otimes u)+\nabla_x\rho^\gamma- \Delta_xu=-\int_{\mathbb{R}^3}(u-v)f\,dv.
\end{aligned}
\right.
\end{equation}
Chae, Kang and Lee \cite{CKL'} showed that the system \eqref{s1} possesses a global classical solution close to an equilibrium
$
\big(\bar f, \bar \rho, \bar u\big)=\big(e^{- {|v|^2}/{2}}, 1,0\big)
$ and proved that the solution converges to the equilibrium exponentially fast in $\mathbb{T}^3$.
Mellet and Vasseur \cite{MV} established the global existence of weak solutions to the system \eqref{s1} for
large initial data, furthermore, they \cite{MV'} studied the asymptotic analysis of the solutions.
In the case of nonhomogeneous Dirichlet boundary condition for the velocity of the fluid, the global existence of weak solutions to the system \eqref{s1} was given by the second author \cite{Li}.
Moreover, this result was extended to the system \eqref{s1} with a local alignment force in \cite{LL2}.
For the case that the friction force is dependent on the density $\rho$, Li, Mu and Wang \cite{LMW} established the global well-posedness of a strong solution when the initial data is a small perturbation of some given equilibrium.

Now we focus on the incompressible kinetic-fluid system.
Chae, Kang and Lee \cite{CKL} derived the global existence of weak and classical solutions to the Vlasov-Fokker-Planck-Navier-Stokes system.
Boudin, Desvillettes, Grandmont and Moussa \cite{BDGM} proved that the Vlasov-Navier-Stokes system possesses global weak solutions in three-dimensional
periodic case.
Furthermore, a similar result in three-dimensional bounded domain was obtained in \cite{Yu}.
When the friction force depends on the density,
Wang and Yu \cite{WY} derived the global well-posedness of weak solutions to the Vlasov-Navier-Stokes equations.
Anoshchenko, Khruslov and Stephan \cite{AKS} proved that the Vlasov-Poisson-Navier-Stokes system possesses global weak solution.
Choi and Jung \cite{CJ} studied the hydrodynamic limit of an incompressible  Vlasov-Poisson-Fokker-Planck-Navier-Stokes system in a bounded domain  by the relative entropy method.

Note that when taking the distribution function $f$ equal to zero, then the system \eqref{VNS} becomes the classical compressible isentropic Navier-Stokes equations. Lions \cite{L} proved the existence of global weak solutions of the three-dimensional Navier-Stokes equations for the adiabatic constant $\gamma\geq 9/5$ with the boundary condition $u|_{\partial\Omega}=0$. Later, Feireisl, Novotn\'{y} and Petzeltov\'{a} \cite{FNP} improved the above result to $\gamma>3/2$. In \cite{JZ}, Jiang and Zhang established the global existence of axisymmetric weak solutions for $\gamma>1$ with axisymmetric initial data. Bresch and Jabin \cite{BJ} proved the global existence of appropriate weak solutions to compressible Navier-Stokes equations with general viscous stress tensor. In the case of nonhomogeneous Dirichlet boundary condition, the global existence of weak solutions was given by Plotnikov and Sokolowski \cite{PS} by means of Young measure and other tools. They first showed the Navier-Stokes equations possess weak solution when $\gamma$ is big enough, then this result was extended to the case of $\gamma>3/2$ with the help of kinetic theory. Further, Chang, Jin and Novotn\'{y} \cite{CJN} gave an other proof of this result thanks to the effective viscous flux identity, oscillations defect measure and renormalization techniques for the continuity equation in the spirt of \cite{L,FNP}.

\subsection{Our results}
Weak solutions to the problem \eqref{VNS}-\eqref{2} is defined as follows:
\begin{defn}\label{defn}
Let $T>0$ arbitrary. The function $(f,\Phi, \rho, u) : (0,T)\times\Omega \to \mathbb{R}_+\times\mathbb{R}\times\mathbb{R}_+\times\mathbb{R}^3$
is called a {\em weak solution} to the problem \eqref{VNS}-\eqref{2} on $[0,T]$ if
it possesses the regularity
\begin{gather*}
f\in L^\infty(0,T;L^1\cap L^\infty(\Omega\times\mathbb{R}^3)),\quad |v|^2f\in L^\infty(0,T;L^1(\Omega\times\mathbb{R}^3)),\\
\Phi\in L^\infty\big(0,T; W^{2,\frac{5}{3}}(\Omega)\big),\quad
\rho\in L^\infty(0,T;L^\gamma(\Omega)),\\
u\in L^2(0,T;H^1(\Omega)),\quad
\rho u\in L^\infty\big(0,T;L^\frac{2\gamma}{\gamma+1}(\Omega)\big),
\end{gather*}
and fulfills the following relations:\medskip\\
1.~{\em Weak formulation of the Vlasov-Fokker-Planck equation:} for any $\varphi\in C^\infty_c([0,T)\times\bar\Omega\times\mathbb{R}^3)$ such that $\varphi=0$ on $(0,T)\times\Sigma^+$, it holds
\begin{align*}
&\int^{T}_0\int_{\Omega\times\mathbb{R}^3}f\big(\partial_t\varphi+v\cdot\nabla_x\varphi+(u-v)\cdot\nabla_v\varphi
-\nabla_x\Phi\cdot\nabla_v\varphi
+\Delta_v\varphi\big)\,dxdvdt\\
&\qquad+\int_{\Omega\times\mathbb{R}^3}f_0\varphi(0,x,v)\,dxdv
=\int_0^T\int_{\Sigma^-}(v\cdot \nu(x))g\varphi\,d\sigma(x)dvdt.
\end{align*}
2.~{\em Weak formulation of the Poisson equation:} for any $\Psi\in C^\infty_c((0,T)\times\Omega)$, it holds
\begin{align*}
\int_0^T\int_\Omega\nabla_x\Phi\cdot\nabla_x\Psi\,dxdt=
\int_0^T\int_\Omega (n-c(x))\Psi\,dxdt,
\end{align*}
where $n:=\int_{\mathbb{R}^3}f\,dv$;\\
3.~{\em Weak formulation of the continuity equation:} for any $\psi\in C^\infty_c([0,T)\times(\Omega\cup\Gamma_{\rm{in}}))$, it holds
\begin{align*}
\int^{T}_0\int_{\Omega}(\rho\partial_t\psi+\rho u\cdot\nabla_x\psi)\,dxdt+\int_{\Omega}\rho_0\psi(0,x)\,dx=
\int_0^T\int_{\Gamma_{\rm{in}}}\rho_B u_B\cdot \nu(x)\varphi\,d\sigma(x)dt;
\end{align*}
4.~{\em Weak formulation of the momentum balance equation:} for any $\phi\in C^\infty_c((0,T)\times\Omega;\mathbb{R}^3)$, it holds
\begin{align*}
\int^{T}_{0}\int_{\Omega}\big(&\rho u\cdot\partial_t\phi+(\rho u\otimes u):\nabla_x\phi+\rho^\gamma{\rm{div}}_x\phi
-\mathbb{S}(\nabla_x u):\nabla_x\phi\\
&+(j-nu)\cdot\phi\big)\,dxdt
+\int_{\Omega}\rho_0u_0\cdot \phi(0,x)\,dx=0,
\end{align*}
where $j:=\int_{\mathbb{R}^3}vf\,dv$;\\
5.~{\em Energy balance inequality:}
for any $\tau\in (0,T)$,
\begin{align}\label{energy main}
\int_{\Omega}&\Big(\frac{1}{2}\rho|u-u_{\infty}|^2+\frac{1}{\gamma-1}\rho^{\gamma}
+\frac{1}{2}|\nabla\Phi|^2
+\int_{\mathbb{R}^3}\frac{|v|^2}{2}f\,dv\Big)(\tau)\,dx\nonumber\\
&+\int^{\tau}_0\int_{\Omega}\mathbb{S}(\nabla(u-u_\infty)):\nabla(u-u_\infty)\,dxdt
+\int^{\tau}_0\int_{\Sigma^-}(v\cdot \nu(x))\frac{|v|^2}{2}g\,d\sigma(x)dvdt\nonumber\\
&+\int_0^\tau\int_{\Gamma_{\rm{in}}}\rho^\gamma|u_B\cdot \nu(x)|\,d\sigma(x)dt
+\int_0^\tau\int_{\Gamma_{\rm{out}}}\frac{1}{\gamma-1}\rho^\gamma|u_B\cdot\nu(x)|\,d\sigma(x)dt\nonumber\\
\leq&\int_{\Omega}\Big(\frac{1}{2}\rho_0|u_0-u_{\infty}|^2+\frac{1}{\gamma-1}\rho_0^{\gamma}+\frac{1}{2}|\nabla\Phi_0|^2
+\int_{\mathbb{R}^3}\frac{|v|^2}{2}f_0\,dv\Big)\,dx\nonumber\\
&+\int^{\tau}_0\int_{\Omega}\big(-\rho^{\gamma}{\rm{div}}u_{\infty}-\mathbb{S}(\nabla u_{\infty}):\nabla(u-u_\infty)
-\rho u\cdot\nabla u_\infty\cdot(u-u_\infty)\big)\,dxdt\nonumber\\
&+\int^{\tau}_0\int_{\Gamma_{\rm{in}}}\frac{\gamma}{\gamma-1}\rho^{\gamma-1}\rho_{B}|u_B\cdot \nu(x)|\,d\sigma(x)dt
+3\int_{\Omega\times\mathbb{R}^3}f\,dxdv
-\int^{\tau}_0\int_{\Omega}(j-nu)\cdot u_\infty\,dxdt,
\end{align}
where $\Phi_0$ is determined by the equation $-\Delta\Phi_0=\int_{\mathbb{R}^3}f_0\,dv-c(x)$, and
$u_\infty(x)\in W^{1,\infty}(\Omega;\mathbb{R}^3)$ is an extension of $u_B(x)$, satisfying
\begin{gather}
{\rm{div}}u_\infty\geq 0\;\;\textrm{a.e.}\;\;{\rm{in}}\;\;U^-_h\equiv\{x\in\Omega\big|{\rm{dist}}(x,\partial\Omega)<h\},\;\;h>0 \;\;{\rm{is\,\,small}}.\label{extension2}
\end{gather}
\end{defn}

\begin{remark}
The existence of the extension $u_\infty$ of $u_B$ was given by  Girinon \cite{Gir}.
\end{remark}

Now we state our main result.
\begin{theorem}\label{main}
Let $\Omega\subset\mathbb{R}^3$ be a bounded domain with smooth boundary, $c(x)\in L^p(\Omega)$ $(1\leq p<+\infty)$ be a given function and $\gamma >3/2$. Suppose that the boundary data
$u_B\in C^2(\partial\Omega;\,\mathbb{R}^3)$, $\rho_B\in C(\partial\Omega)$, and $\mathop{{\rm{min}}}\limits_{\Gamma_{\rm{in}}}\rho_B=\underline{\rho}_B>0$.
Assume that the initial and boundary data are of finite energy:
\begin{align}
&0\leq f_0\in L^1\cap L^\infty(\Omega\times\mathbb{R}^3),\;\;0\leq g\in L^1\cap L^\infty((0,T)\times\Sigma^-), \label{fv}\\
&\int_{\Omega}\rho_0\,dx>0,\;\;\;\;\int^T_0\int_{\Sigma^-}|v|^2g(t,x,v)|v\cdot \nu(x)|\,d\sigma(x)dvdt<\infty, \\
&\int_{\Omega}\Big(\frac{1}{2}\rho_0|u_0-u_\infty|^2+\frac{1}{\gamma-1}\rho_0^\gamma
+\int_{\mathbb{R}^3}\frac{|v|^2}{2}f_0\,dv\Big)\,dx
<\infty,\label{mv}
\end{align}
then for any $T>0$, the problem \eqref{VNS}-\eqref{2} possesses at least one global weak solution $(f,\Phi,\rho,u)$ on $[0,T]$.
\end{theorem}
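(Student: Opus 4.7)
The plan is to construct a three-level approximation scheme in the spirit of Feireisl-Novotn\'y-Petzeltov\'a, adapted to the inhomogeneous Dirichlet boundary condition \`a la Chang-Jin-Novotn\'y, and combined with a regularization of the kinetic equation in the style of Mellet-Vasseur and of the second author's work. Concretely, I would introduce parameters $\varepsilon,\delta>0$ and work with the system obtained by (i) adding artificial viscosity $\varepsilon\Delta_x\rho$ to the continuity equation together with a suitable boundary flux on $\Gamma_{\rm in}$; (ii) adding an artificial pressure $\delta\rho^\beta$ with $\beta$ large to the momentum equation; (iii) truncating the drag and the electric force, e.g.\ replacing $v$ and $\nabla_x\Phi$ in the Vlasov equation by bounded cut-offs, so that the Fokker-Planck operator has bounded drift; (iv) discretizing the velocity equation by a Faedo-Galerkin scheme on a basis of $H^1_0(\Omega;\mathbb{R}^3)$ after subtracting the lifting $u_\infty$. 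Existence at the most regularized level is obtained by a Schauder fixed-point argument: for a given $u$, solve the linear Vlasov-Fokker-Planck-Poisson subsystem by standard parabolic theory and the maximum principle, producing $f,\Phi$ with $f\in L^1\cap L^\infty$; then solve the regularized Navier-Stokes subsystem via Galerkin to generate a new velocity; finally verify continuity and compactness of the resulting map.

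Next I would derive uniform a priori bounds at each level from a version of the energy inequality \eqref{energy main}. Testing the momentum equation with $u-u_\infty$, the VFP equation with $\tfrac12|v|^2$, and integrating the Poisson equation against $\Phi_t$ yields, modulo boundary terms that are controlled by the assumptions on $(u_B,\rho_B,g)$ and by Girinon's extension satisfying \eqref{extension2}, uniform $L^\infty_t L^1_x$ bounds on $\rho|u-u_\infty|^2$ and $\rho^\gamma$, an $L^2_t H^1_x$ bound on $u$, an $L^\infty_t L^1_{x,v}$ bound on $|v|^2 f$, and an $L^\infty_t L^2_x$ bound on $\nabla_x\Phi$. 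Combined with a maximum-principle $L^\infty$ estimate for $f$ and standard interpolation, this yields the moment bounds stated in Definition \ref{defn} and places the drag density $j-nu$ in some $L^q_{t,x}$ with $q>1$, so that all terms in the weak formulation make sense.

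The three successive passages to the limit are the technical heart of the proof. In the Galerkin limit and in $\varepsilon\to 0$ I would use velocity-averaging lemmas, combined with the Fokker-Planck hypoellipticity, to obtain strong compactness of $n=\int f\,dv$ and $j=\int vf\,dv$ in suitable spaces, while the $\varepsilon$-regularized continuity equation controls $\rho$ as in the Feireisl framework. The decisive obstacle is the final limit $\delta\to 0$: one must show strong convergence of $\rho$ in $L^1$ in order to identify the weak limit of $\rho^\gamma$ with $\bar\rho^{\,\gamma}$. I plan to follow the effective viscous flux identity of Lions-Feireisl in the form adapted by Chang-Jin-Novotn\'y to handle the nonhomogeneous boundary data (where the usual test by $\mathcal{B}[\rho-\langle\rho\rangle]$ must be replaced by a Bogovski\u{\i}-type operator compatible with $\Gamma_{\rm in}$), coupled with the oscillation defect measure and the DiPerna-Lions renormalization of the continuity equation valid for $\gamma>3/2$. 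The kinetic coupling enters through the drag force on the right-hand side, but it contributes only a compact lower-order perturbation because $j$ is strongly compact via averaging and $u$ converges strongly in $L^2_{t,x}$.

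Once the limiting quadruple $(f,\Phi,\rho,u)$ is constructed, I would verify the four weak formulations of Definition \ref{defn} together with the trace identity $\gamma^- f=g$ on $\Sigma^-$ in the sense of renormalized kinetic solutions, and recover the energy inequality \eqref{energy main} by lower semicontinuity of the dissipation terms and weak convergence of the remaining ones. This yields the global weak solution claimed in Theorem \ref{main} on any time interval $[0,T]$ for every $\gamma>3/2$.
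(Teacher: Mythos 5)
Your overall architecture (Faedo--Galerkin $+$ artificial viscosity $\varepsilon$ in the continuity equation $+$ artificial pressure $\delta\rho^\beta$, energy estimates built on Girinon's extension $u_\infty$, and the Chang--Jin--Novotn\'y effective-viscous-flux / oscillation-defect-measure / renormalization machinery for the limits $\varepsilon\to0$ and $\delta\to0$) coincides with the paper's. The genuine gap is in your fixed-point step, which is precisely where the paper locates the main difficulty. You propose to fix $u$ and ``solve the linear Vlasov--Fokker--Planck--Poisson subsystem by standard parabolic theory and the maximum principle''. That subsystem is not linear: the self-consistent field couples $f$ and $\Phi$, and the paper notes that the Carrillo-type iteration for the coupled VPFP subsystem does not deliver the required moment estimates within the structure of \eqref{VNS}. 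Moreover, the Poisson equation with source $n\in L^\infty(0,T;L^{5/3}(\Omega))$ only gives $\Phi\in W^{2,5/3}(\Omega)$, which is far from the $W^{1,\infty}$ regularity of the drift needed to apply the linear Fokker--Planck theory of \cite{C} and to run the moment identity \eqref{v23}. Your remedy --- truncating $\nabla_x\Phi$ by a bounded cut-off --- introduces a further approximation parameter whose removal you never address, and removing it would again require exactly the pointwise control on $\nabla\Phi$ that is unavailable at this stage.

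The paper resolves this by (i) regularizing the Poisson equation to $-\Delta\Phi-\varepsilon\Delta^{2m+1}\Phi=n-c$ with the boundary conditions \eqref{8'}, so that $\Phi\in W^{4m+2,\frac{5}{3}}(\Omega)\hookrightarrow W^{1,\infty}(\Omega)$ for $m$ large, and (ii) taking the \emph{pair} $(\tilde u,\nabla\Phi^*)$ as the fixed-point variable and invoking the Leray--Schauder homotopy version (Lemma \ref{LS}) rather than plain Schauder. The point of (ii) is that the uniform bound needed to close the argument --- the second moment of $f$, hence the $L^{5/3}$ bound on $n$, hence the bound on $\nabla\Phi_N$ --- need only be verified along solutions of $w=\sigma\mathscr{T}w$, where the cancellation $-2\int_\Omega\nabla\Phi_N\cdot j_N\,dx=-\frac{d}{dt}\int_\Omega\big(|\nabla\Phi_N|^2+\varepsilon|\nabla^{2m+1}\Phi_N|^2\big)\,dx$ is available; for an arbitrary input $\nabla\Phi^*$ no such estimate holds. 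Finally, truncating the friction term $-v$ in the Fokker--Planck operator, as you suggest, is unnecessary and would spoil the $|v|^2$-moment identity; the paper truncates only the fluid velocity $\tilde u$. With these corrections your plan matches the paper's proof.
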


Compared with the weak existence result obtained in \cite{LL2}, where the
Vlasov-Fokker-Planck-Navier-Stokes equations with alignment force term is considered, one can clearly see that the main difficulty comes from the coupling with the Poisson equation \eqref{VNS}$_2$. In fact, the weak solution theory for Vlasov-Poisson-Fokker-Planck system has been established in a classical work \cite{C}. However, the $k$-th moment estimates for the iteration scheme can only be obtained with additional assumption on the solution. If we perform the same iteration scheme for the kinetic part of our system, due to the complicated structure of this system, we can not arrive at the similar moment estimates. Therefore, instead of using the method developed in \cite{C}, we construct a new approximation scheme to decouple the Vlasov-Poisson-Fokker-Planck system and apply a special case of Leray-Schauder fixed point theorem to complete a sequence of well-defined approximate solutions. Consequently, the global weak solution is obtained by entropy estimate and compactness argument.

Next we give a brief description of the method in handling with the kinetic equation.  For a given $\Phi^*\in L^\infty(0,T;W^{1,\infty}(\Omega))$, we can construct a unique solution $f$ to the Vlasov-Fokker-Planck equation with the help of
a main result in \cite{C}. More precisely, there exists a unique weak solution to the following linear equation:
\begin{align*}
\partial_t f +v\cdot \nabla_x f+{\rm{div}}_v((\nabla\Phi^*+\tilde u\chi_{\{|\tilde u|\leq N\}}-v)f)-\Delta_vf=0,
\end{align*}
where $\chi$ is a cut-off function for the fluid velocity. For more detailed technics for the fluid part, we refer to Section \ref{app}.
The next thing to do is to find a unique solution $\Phi$ to the Poisson equation $-\Delta \Phi=\int_{\mathbb{R}^3} f dv-c(x)$.
However, a $W^{1,\infty}$ bound for $\Phi$ can not be obtained by strong solution theory. Therefore, in order to close the definition of the map, we need to introduce the regularization of the Poisson equation. Furthermore, since the moment estimates are only valid for fixed point of this map, instead of using the iteration scheme as in \cite{C}, we use a special type of Leray-Schauder fixed point theorem to establish the existence of $\nabla\Phi$.
More details can be found in Section \ref{app}.

The rest of this paper is organized as follows.
Section \ref{app} is devoted to constructing the approximate solutions to the problem \eqref{VNS}-\eqref{2}.
In Section \ref{l}, we pass to the limits on regularized parameters in turn and complete the proof of Theorem \ref{main}.

\section{Approximate solutions}\label{app}

Our first goal of this section is to solve the regularized system:
\begin{align}
&\partial_tf+v\cdot \nabla_xf+{\rm{div}}_v\big((u-v)f\big)-\nabla_x\Phi\cdot\nabla_vf-\Delta_vf=0,\label{3} \\
&-\Delta\Phi-\varepsilon\Delta^{2m+1}\Phi=\int_{\mathbb{R}^3}f\,dv-c(x),\label{4}\\
&\partial_t\rho+{\rm{div}}(\rho u)=\varepsilon\Delta\rho,\label{5} \\
&\partial_t(\rho u)+{\rm{div}}(\rho u\otimes u)+\nabla\rho^{\gamma}+\delta\nabla\rho^\beta
+\varepsilon\nabla\rho\cdot\nabla u
\nonumber\\
&\qquad\quad ={\rm{div}}\mathbb{S}(\nabla u)
+\varepsilon{\rm{div}}(|\nabla(u-u_\infty)|^2\nabla(u-u_\infty))
-\int_{\mathbb{R}^3}(u-v)f\,dv,\label{6}
\end{align}
where $m\in \mathbb{N}$ is fixed large as necessary, $\varepsilon>0$, $\delta>0$ and $\beta>{\rm{max}}\{\gamma,9/2\}$.
The system \eqref{3}-\eqref{6} is equipped with the following initial and boundary data:
\begin{align}\label{7}
\big(f(0,x,v),\rho(0,x),u(0,x)\big)=v(f_{0\varepsilon}(x,v),\rho_0(x),u_0(x)\big),
\end{align}
and
\begin{align}
&\gamma^-f(t,x,v)\big|_{(0,T)\times\Sigma^-}=g_\varepsilon(t,x,v),\label{8}\\
&\Phi|_{(0,T)\times\partial\Omega}=\Delta\Phi|_{(0,T)\times\partial\Omega}=\cdot \cdot \cdot
=\Delta^{2m}\Phi|_{(0,T)\times\partial\Omega}=0,\label{8'}\\
&(-\varepsilon\nabla\rho+\rho u)\cdot \nu(x)\big|_{(0,T)\times\partial\Omega}=
\left\{
       \begin{array}{ll}
       \rho_Bu_B\cdot \nu(x) & \;\;\;{\rm{on}}\;\;\;\Gamma_{\rm{in}},\\
       \rho u_B\cdot \nu(x)&\;\;\;{\rm{on}}\;\;\;\partial\Omega\backslash\Gamma_{\rm{in}},
       \end{array}
\right.\label{9}\\
&u(t,x)\big|_{(0,T)\times\partial \Omega}=u_B(x),\label{10}
\end{align}
where $f_{0\varepsilon}$ and $g_\varepsilon$
are approximate sequences of $f_0$ and $g$, respectively. They satisfy \eqref{fv}-\eqref{mv} uniformly with respect to $\varepsilon$ and
\begin{gather*}
\int_{\Omega\times\mathbb{R}^3}|v|^\kappa f_{0\varepsilon}(x,v)\,dxdv<\infty,\quad \forall\; \kappa\in[0,\kappa_0]\;\; {\rm{with}}\;\; \kappa_0\geq 5,\\
\int_0^T\int_{\Sigma^-}|v|^\kappa g_{\varepsilon}|v\cdot\nu(x)|\,d\sigma(x)dvdt<\infty,\quad \forall\; \kappa\in[0,\kappa_0]\;\; {\rm{with}}\;\; \kappa_0\geq 5.
\end{gather*}
Here the initial and boundary data verify the assumptions of Theorem \ref{main} and
\begin{equation}\label{11}
\left\{
\begin{aligned}
  &u_0\in L^2(\Omega),\quad \rho_0\in W^{1,2}(\Omega),\\
  &0<\underline{\rho}\leq\rho_0(x)\leq\bar{\rho}<\infty,\quad x\in\Omega,\\
  &0<\underline{\rho}\leq\rho_B(x)\leq\bar{\rho}<\infty,\quad x\in\Gamma_{\rm{in}}.
\end{aligned}
\right.
\end{equation}
We still use $(\rho_0,u_0,\rho_B)$ to denote the initial and boundary data to the system \eqref{3}-\eqref{6} and will emphasize the dependence
of the parameter if it is needed.

In the rest of this section, we focus on the solvability of the problem \eqref{3}-\eqref{10}.

\subsection{Decoupled system}\label{2.1.1}
To establish the global existence of weak solutions to the problem \eqref{3}-\eqref{10}, Galerkin method is a classical one.
We introduce a finite dimensional space $X={\rm{span}}\{\Psi_i\}_{i=1}^N$, where the smooth functions
$\Psi_i(x)\,(1\leq i\leq N)$ are orthonormal in $L^2(\Omega)$.
We are now in a position to construct approximate solutions to the following system:
\begin{align}
&\partial_tf+v\cdot \nabla_xf+{\rm{div}}_v\big((\tilde u\chi_{\{|\tilde u|\leq N\}}-v)f \big)
-\nabla_x\Phi^*\cdot \nabla_v f-\Delta_vf=0,\label{12}\\
&f|_{t=0}=f_{0\varepsilon}(x,v),\quad f|_{(0,T)\times\Sigma^-}=g_\varepsilon(t,x,v),\label{v213}\\
&-\Delta\Phi-\varepsilon\Delta^{2m+1}\Phi=\int_{\mathbb{R}^3}f\,dv-c(x),\label{13}\\
&\Phi|_{(0,T)\times\partial\Omega}=\Delta\Phi|_{(0,T)\times\partial\Omega}=\cdot\cdot\cdot
=\Delta^{2m}\Phi|_{(0,T)\times\partial\Omega}=0,\label{14}\\
&\partial_t\rho+{\rm{div}}_x(\rho u)=\varepsilon\Delta_x\rho, \label{26}\\
&\rho|_{t=0}=\rho_0(x),\quad
(-\varepsilon\nabla\rho+\rho u)\cdot \nu(x)\big|_{(0,T)\times\partial\Omega}=
\left\{
       \begin{array}{llr}
       \rho_Bu_B\cdot \nu(x)& \;\;{\rm{on}}\;\;\Gamma_{\rm{in}},\\
       \rho u_B\cdot \nu(x)&\;\;{\rm{on}}\;\;\partial\Omega\backslash\Gamma_{\rm{in}},
       \end{array}
\right.\label{27}\\
&\partial_t(\rho u)+{\rm{div}}(\rho u\otimes u)+\nabla\rho^{\gamma}+\delta\nabla\rho^{\beta}+\varepsilon\nabla\rho\cdot\nabla u\nonumber\\
&\qquad ={\rm{div}}\mathbb{S}(\nabla u)+\varepsilon{\rm{div}}(|\nabla(u-u_\infty)|^{2}\nabla(u-u_\infty))
-\chi_{\{|\tilde{u}|\leq N\}}\int_{\mathbb{R}^3}(\tilde u-v)f\,dv, \label{28}\\
&u|_{t=0}=u_0(x),\quad u(t,x)\big|_{(0,T)\times\partial\Omega}=u_B(x),\label{29}
\end{align}
where $(\tilde{u},\nabla\Phi^*)\in L^2(0,T;L^2(\Omega))\times L^\infty(0,T;W^{4m,\frac{5}{3}}(\Omega))$ is a given pair and $\chi$ is continuous.

For this purpose, we commence by recalling a known result due to \cite{MV}.
\begin{lemma}\label{1'}
If one has
\begin{align*}
&\|f\|_{L^\infty((0,T)\times\Omega\times\mathbb{R}^3)}\leq M, \\
&\int_{\Omega\times \mathbb{R}^3}|v|^\kappa f(t,x,v)\,dxdv\leq M,\;\; t\in[0,T],\;\; \kappa\in[0,\kappa_0],
\end{align*}
where $\kappa_0>0$,
then there exists a constant $C(M)$ such that
\begin{align*}
&\Big\|\int_{\mathbb{R}^3}f(t,x,v)\,dv\Big\|_{L^p(\Omega)}\leq C(M), \quad p\in\Big[1,\frac{\kappa_0+3}{3}\Big],\\
&\Big\|\int_{\mathbb{R}^3}vf(t,x,v)\,dv\Big\|_{L^p(\Omega)}\leq C(M), \quad p\in\Big[1,\frac{\kappa_0+3}{4}\Big],
\end{align*}
for any $t\in[0,T]$.
\end{lemma}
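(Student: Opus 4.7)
The plan is to use the classical velocity-truncation and optimization argument that underlies moment estimates in kinetic theory. For each fixed $(t,x)$ I would split the velocity integral at some radius $R=R(t,x)$ to be chosen later:
\begin{equation*}
\int_{\mathbb{R}^3}f(t,x,v)\,dv = \int_{|v|\leq R}f\,dv + \int_{|v|>R}f\,dv,
\end{equation*}
estimating the first piece by the $L^\infty$ control, $\int_{|v|\leq R}f\,dv\leq C\|f\|_{L^\infty}R^3$, and the second by trading the weight, $\int_{|v|>R}f\,dv\leq R^{-\kappa_0}\int_{\mathbb{R}^3}|v|^{\kappa_0}f\,dv$. Optimizing $R$ so the two terms balance yields the pointwise interpolation
\begin{equation*}
\int_{\mathbb{R}^3}f\,dv \leq C\,\|f\|_{L^\infty}^{\kappa_0/(\kappa_0+3)}\Bigl(\int_{\mathbb{R}^3}|v|^{\kappa_0}f\,dv\Bigr)^{3/(\kappa_0+3)}.
\end{equation*}

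Next I would raise this inequality to the power $p$ and integrate in $x$. The right-hand side is controlled in terms of $M$ as soon as $3p/(\kappa_0+3)\leq 1$, i.e. $p\in[1,(\kappa_0+3)/3]$; in the endpoint case the $x$-integral reduces directly to the hypothesized bound $\int_{\Omega\times\mathbb{R}^3}|v|^{\kappa_0}f\,dxdv\leq M$, and intermediate exponents $p$ follow by an immediate H\"older interpolation between this endpoint and the trivial $L^\infty_x$-bound coming from $\|f\|_{L^\infty}\leq M$ together with finite volume of $\Omega$.

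For the current $j=\int_{\mathbb{R}^3}vf\,dv$, I would run exactly the same truncation, now with an extra factor $|v|$ in the integrand: $\int_{|v|\leq R}|v|f\,dv\leq C\|f\|_{L^\infty}R^4$ and $\int_{|v|>R}|v|f\,dv\leq R^{1-\kappa_0}\int|v|^{\kappa_0}f\,dv$. Balancing the two terms now gives
\begin{equation*}
\Bigl|\int_{\mathbb{R}^3}vf\,dv\Bigr| \leq C\,\|f\|_{L^\infty}^{(\kappa_0-1)/(\kappa_0+3)}\Bigl(\int_{\mathbb{R}^3}|v|^{\kappa_0}f\,dv\Bigr)^{4/(\kappa_0+3)},
\end{equation*}
and integrating over $\Omega$ produces the stated $L^p(\Omega)$ bound for $p\in[1,(\kappa_0+3)/4]$, again with the endpoint handled by the hypothesis and the remaining $p$ by H\"older interpolation.

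The whole argument is elementary and I do not anticipate any genuine obstacle; the only item worth checking is that the exponent of $\|f\|_{L^\infty}$ coming out of the optimization is nonnegative (which holds once $\kappa_0\geq 1$, as in the stated hypothesis $\kappa_0\geq 0$ combined with $p\geq 1$), so the constant in the final estimate depends only on the parameter $M$. This is why the authors merely cite \cite{MV} rather than reproduce the computation.
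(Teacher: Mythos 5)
Your argument is correct and is precisely the truncation-and-optimization proof behind the cited result \cite{MV}; the paper itself does not reprove the lemma but simply imports it from there. One cosmetic point: for intermediate exponents you neither need nor have an $L^\infty_x$ bound on $\int_{\mathbb{R}^3} f\,dv$ --- the inclusion $L^{(\kappa_0+3)/3}(\Omega)\subset L^{p}(\Omega)$ on the bounded domain $\Omega$ already covers all $p<(\kappa_0+3)/3$ (and likewise for the current), and the sign of the $\|f\|_{L^\infty}$-exponent in the current estimate is immaterial since for $\kappa_0<1$ the stated range $[1,(\kappa_0+3)/4]$ is empty.
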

It is a direct consequence of \cite{C} that there exists a unique solution $f\in L^\infty(0,T;L^1\cap L^\infty(\Omega\times\mathbb{R}^3))$
to the problem \eqref{12}-\eqref{v213}.
Moreover, the solution $f$ satisfies
\begin{align}
&\partial_t\int_{\mathbb{R}^3}f\,dv=-{\rm{div}}_x\int_{\mathbb{R}^3}vf\,dv,\label{v24}\\
&\|f\|_{L^\infty(0,T;L^p(\Omega\times\mathbb{R}^3))}
\leq e^{\frac{3T}{p'}}(\|f_{0\varepsilon}\|_{L^p(\Omega\times\mathbb{R}^3)}
+\|g_\varepsilon\|_{L^p((0,T)\times\Sigma^-)}),\quad 1\leq p\leq \infty,\label{v25}\\
&\|\gamma^+f\|_{L^p((0,T)\times\Sigma^+)}
\leq e^{\frac{3T}{p'}}(\|f_{0\varepsilon}\|_{L^p(\Omega\times\mathbb{R}^3)}
+\|g_\varepsilon\|_{L^p((0,T)\times\Sigma^-)}),\quad 1\leq p\leq \infty,\label{v26}
\end{align}
where $p'$ verifies $\frac{1}{p}+\frac{1}{p'}=1$, and for any $1\leq l<\infty$,
\begin{align}\label{v23}
\frac{d}{dt}\int_{\Omega\times\mathbb{R}^3}|v|^lf\,dxdv
&=-l\int_{\Omega\times\mathbb{R}^3}|v|^lf\,dxdv+l(l+1)\int_{\Omega\times\mathbb{R}^3}|v|^{l-2}f\,dxdv\nonumber\\
&-\int_{\Sigma^+}|v\cdot\nu(x)||v|^l\gamma^+f\,d\sigma(x)dv+\int_{\Sigma^-}|v\cdot\nu(x)||v|^lg_\varepsilon\,d\sigma(x)dv\nonumber\\
&+l\int_{\Omega\times\mathbb{R}^3}\tilde{u}\chi_{\{|\tilde{u}|\leq N \}}\cdot v|v|^{l-2}f\,dxdv
-l\int_{\Omega\times\mathbb{R}^3}\nabla\Phi^*\cdot v|v|^{l-2}f\,dxdv.
\end{align}
This together with the imbedding $W^{4m,\frac{5}{3}}(\Omega)\hookrightarrow L^\infty(\Omega)$ (m is big enough) ensure us to derive that
\begin{align*}
\big\||v|^lf\big\|_{L^\infty(0,T;L^1(\Omega\times\mathbb{R}^3))}\leq C\Big(N,\varepsilon,T,\|\nabla\Phi^*\|_{L^\infty\big(0,T;W^{4m,\frac{5}{3}}(\Omega)\big)}\Big).
\end{align*}
With this observation at hand, and combining \eqref{v25} and Lemma \ref{1'}, we have
\begin{align}
&\Big\|\int_{\mathbb{R}^3}f\,dv \Big\|_{L^\infty\big(0,T;L^\frac{l+3}{3}(\Omega)\big)}\leq
C\Big(N,\varepsilon,T,\|\nabla\Phi^*\|_{L^\infty\big(0,T;W^{4m,\frac{5}{3}}(\Omega)\big)}\Big),\label{v214}\\
&\Big\|\int_{\mathbb{R}^3}vf\,dv \Big\|_{L^\infty\big(0,T;L^\frac{l+3}{4}(\Omega)\big)}\leq
C\Big(N,\varepsilon,T,\|\nabla\Phi^*\|_{L^\infty\big(0,T;W^{4m,\frac{5}{3}}(\Omega)\big)}\Big)\label{v215}.
\end{align}

Based on \eqref{v214} and the properties of the elliptic problem (\cite{Ab}), we infer that there exists a unique solution $\Phi$ to the following problem
\begin{equation}\label{15}
\left\{
\begin{aligned}
  &-\Delta\Phi-\varepsilon\Delta^{2m+1}\Phi=\int_{\mathbb{R}^3}f\,dv-c(x),\\
  &\Phi|_{(0,T)\times\partial\Omega}=\Delta\Phi|_{(0,T)\times\partial\Omega}=\cdot\cdot\cdot=
  \Delta^{2m}\Phi|_{(0,T)\times\partial\Omega}=0,
\end{aligned}
\right.
\end{equation}
where $f$ is the weak solution to system \eqref{12}-\eqref{v213}. Furthermore,
\begin{align*}
&\|\Phi\|_{L^\infty\big(0,T;W^{4m+2,\frac{5}{3}}(\Omega)\big)}\leq C(\varepsilon)
\Big(\Big\|\int_{\mathbb{R}^3}f\,dv \Big\|_{L^\infty\big(0,T;L^\frac{5}{3}(\Omega)\big)}+\|c(x)\|_{L^\frac{5}{3}(\Omega)} \Big),\\
&\|\Phi\|_{L^\infty\big(0,T;W^{2,\frac{5}{3}}(\Omega)\big)}\leq C
\Big(\Big\|\int_{\mathbb{R}^3}f\,dv \Big\|_{L^\infty\big(0,T;L^\frac{5}{3}(\Omega)\big)}+\|c(x)\|_{L^\frac{5}{3}(\Omega)} \Big).
\end{align*}

It is time to find an approximate solution to the Navier-Stokes system \eqref{26}-\eqref{29} when $f$
is the weak solution to the problem \eqref{12}-\eqref{v213}.
Notice that $u_N\in C([0,T];X)$ can be written as
$$u_N(t,x)=\sum^N_{i=1}\zeta_i(t)\Psi_i(x),$$
where $\zeta_i(t)$ $(i=1,2,\cdots N)$ are functions of $t$.
Thanks to \eqref{v214}-\eqref{v215}, we derive that
\begin{align*}
\Big\|\chi_{\{|\tilde u| \leq N\}}\int_{\mathbb{R}^3}(\tilde u-v)f\,dv \Big\|_{L^\infty(0,T;L^2(\Omega))}\leq
C\Big(N,\varepsilon,T,\|\nabla\Phi^*\|_{L^\infty\big(0,T;W^{4m,\frac{5}{3}}(\Omega)\big)}\Big).
\end{align*}
An argument similar to the one used in \cite{CJN} shows that the problem \eqref{26}-\eqref{29}
possesses a unique approximate solution $(\rho_{N},u_{N})$
on the whole time interval $[0,T]$.
In addition, the couple $(\rho_{N},u_{N})$ satisfies, for any $(\tau,x)\in (0,T)\times\Omega$,
\begin{equation}\label{30}
\inf_{x\in\Omega}\rho_0(x)e^{-\int_0^T\|{\rm{div}}u_N\|_{L^\infty(\Omega)}\,dt}\leq \rho_N(\tau,x)\leq
\sup_{x\in \Omega}\rho_0(x) e^{\int_0^T\|{\rm{div}}u_N\|_{L^\infty(\Omega)}\,dt},
\end{equation}
and,
\begin{align}\label{31}
&\int_{\Omega}\Big(\frac{1}{2}\rho_{N}|u_{N}-u_\infty|^2+\frac{1}{\gamma-1}\rho_{N}^{\gamma}
+\frac{\delta}{\beta-1}\rho_{N}^{\beta}+\frac{1}{2}\rho_{N}^{2}\Big)\,dx
+\varepsilon\int_0^{\tau}\int_{\Omega}|\nabla\rho_{N}|^2\,dxdt\nonumber\\
&+\frac{1}{2}\int^\tau_0\int_{\partial\Omega}\rho_{N}^2|u_B\cdot \nu(x)|\,d\sigma(x)dt
+\varepsilon\int^{\tau}_0\int_{\Omega}(\gamma\rho_{N}^{\gamma-2}+\delta\beta\rho_{N}^{\beta-2})|\nabla\rho_{N}|^2\,dxdt\nonumber\\
&+\int_0^{\tau}\int_{\Omega}\mathbb{S}(\nabla(u-u_\infty)):\nabla(u-u_\infty)\,dxdt
+\int_0^\tau\int_{\Gamma_{\rm{out}}}\Big(\frac{1}{\gamma-1}\rho_{N}^\gamma+\frac{\delta}{\beta-1}\rho_{N}^\beta \Big)|u_B\cdot \nu(x)|\,d\sigma(x)dt
\nonumber\\
&+\varepsilon\int_0^\tau\int_{\Omega}|\nabla(u_{N}-u_\infty)|^4\,dxdt
+\int_0^\tau\int_{\Gamma_{\rm{in}}}(\rho_{N}^\gamma+\delta\rho_{N}^\beta)|u_B\cdot\nu(x)|\,d\sigma(x)dt
\nonumber\\
\leq\;&\int_{\Omega}\Big(\frac{1}{2}\rho_0|u_0-u_\infty|^2+\frac{1}{\gamma-1}\rho^{\gamma}_0
+\frac{\delta}{\beta-1}\rho_0^{\beta}+\frac{1}{2}\rho_0^2\Big)\,dx
+\int^\tau_0\int_{\Gamma_{\rm{in}}}\rho_{N}\rho_B|u_B\cdot \nu(x)|\,d\sigma(x)dt \nonumber\\
&+\int^{\tau}_0\int_{\Gamma_{\rm{in}}}\Big(\frac{\gamma}{\gamma-1}\rho_{N}^{\gamma-1}
+\frac{\delta\beta}{\beta-1}\rho_{N}^{\beta-1}\Big)\rho_B|u_B\cdot \nu(x)|\,d\sigma(x)dt
-\frac{1}{2}\int^{\tau}_0\int_{\Omega}\rho_{N}^2{\rm{div}}u_{N}\,dxdt\nonumber\\
&+\int^{\tau}_0\int_{\Omega}\big(-(\rho_{N}^{\gamma}+\delta\rho_{N}^{\beta}){\rm{div}}u_\infty
-\mathbb{S}(\nabla u_\infty):\nabla(u_{N}-u_\infty)
-\rho_{N} u_{N}\cdot\nabla u_\infty\cdot(u_{N}-u_\infty)\nonumber\\
&\qquad+\varepsilon\nabla\rho_{N}\cdot\nabla(u_{N}-u_\infty)\cdot u_\infty  \big)\,dxdt
+\int^{\tau}_0\int_{\Omega\times\mathbb{R}^3}f(v-\tilde{u})\chi_{\{|\tilde u|\leq N\}}\cdot (u_{N}-u_\infty)\,dxdvdt.
\end{align}

\subsection{An argument for $(\tilde u,\nabla\Phi^*)=(u_N,\nabla\Phi_N)$}\label{fix}
Before proceeding further, we recall a special case of Leray-Schauder fixed point theorem \cite{GT}:
\begin{lemma}\label{LS}
Let $\mathscr{T}$ be a compact mapping of a Banach space $\mathscr{B}$ into itself, and suppose there exists a constant $M$ such that
\begin{align*}
\|w\|_{\mathscr{B}}<M,
\end{align*}
for all $w\in\mathscr{B}$ and $\sigma\in [0,1]$ satisfying $w=\sigma\mathscr{T}w$. Then $\mathscr{T}$ has a fixed point.
\end{lemma}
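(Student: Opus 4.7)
The plan is to reduce the statement to the classical Schauder fixed point theorem by cutting off the mapping $\mathscr{T}$ outside the closed ball of radius $M$, since the \textit{a priori} bound hypothesis is exactly what is needed to rule out boundary fixed points of the truncated map.

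More concretely, I would introduce the radial retraction $r : \mathscr{B} \to \overline{B_M}$ onto the closed ball $\overline{B_M} := \{w\in \mathscr{B}: \|w\|_{\mathscr{B}} \leq M\}$, defined by
\begin{equation*}
r(w) = \begin{cases} w, & \|w\|_{\mathscr{B}} \leq M, \\ \dfrac{M w}{\|w\|_{\mathscr{B}}}, & \|w\|_{\mathscr{B}} > M. \end{cases}
\end{equation*}
A direct computation shows $r$ is continuous (in fact $1$-Lipschitz) from $\mathscr{B}$ into $\overline{B_M}$. Then I would consider the truncated operator $\mathscr{T}^{\ast} := r \circ \mathscr{T} : \overline{B_M} \to \overline{B_M}$. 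Because $\mathscr{T}$ maps $\mathscr{B}$ into itself compactly and $r$ is continuous, the composition $\mathscr{T}^{\ast}$ is continuous and sends bounded sets into relatively compact ones, hence it is compact. Since $\overline{B_M}$ is a closed, convex, bounded subset of the Banach space $\mathscr{B}$, Schauder's fixed point theorem applies and yields some $w^{\ast}\in \overline{B_M}$ with $\mathscr{T}^{\ast} w^{\ast} = w^{\ast}$.

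It then remains to argue that $w^{\ast}$ is actually a fixed point of $\mathscr{T}$ itself. If $\|\mathscr{T} w^{\ast}\|_{\mathscr{B}} \leq M$, then $r$ acts as the identity on $\mathscr{T} w^{\ast}$ and we are done. Otherwise $\|\mathscr{T} w^{\ast}\|_{\mathscr{B}} > M$, and the definition of $r$ gives
\begin{equation*}
w^{\ast} = \frac{M}{\|\mathscr{T} w^{\ast}\|_{\mathscr{B}}}\,\mathscr{T} w^{\ast} = \sigma \mathscr{T} w^{\ast}, \qquad \sigma := \frac{M}{\|\mathscr{T} w^{\ast}\|_{\mathscr{B}}} \in (0,1),
\end{equation*}
with $\|w^{\ast}\|_{\mathscr{B}} = M$. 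But the hypothesis of the lemma says that any pair $(w,\sigma)\in\mathscr{B}\times[0,1]$ solving $w=\sigma\mathscr{T} w$ must satisfy $\|w\|_{\mathscr{B}} < M$, which contradicts $\|w^{\ast}\|_{\mathscr{B}} = M$. Hence the alternative is ruled out and $\mathscr{T} w^{\ast} = w^{\ast}$.

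The main obstacle is conceptual rather than technical: one has to recognize that the \textit{a priori} bound along the homotopy $w = \sigma \mathscr{T} w$ is precisely the right condition to exclude boundary fixed points created by the retraction, so that Schauder's theorem on a convex compactly-mapped set translates back to a genuine fixed point of $\mathscr{T}$. The continuity of $r$ (and in particular its non-expansiveness, which one may verify by splitting cases on whether $w_1,w_2$ lie inside or outside $\overline{B_M}$) and the stability of compactness under composition with continuous maps are the only analytic points to check; both are standard.
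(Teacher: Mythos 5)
Your proof is correct: the paper itself does not prove this lemma but simply cites Gilbarg--Trudinger, and your argument (radial retraction onto $\overline{B_M}$, Schauder's theorem for the truncated compact map $r\circ\mathscr{T}$, and the \emph{a priori} bound ruling out the boundary case $w^{\ast}=\sigma\mathscr{T}w^{\ast}$ with $\|w^{\ast}\|_{\mathscr{B}}=M$) is exactly the classical proof of Theorem 11.3 in that reference. Nothing to add.
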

To establish the existence of approximate solution to the problem \eqref{12}-\eqref{29} with $(\tilde u,\nabla\Phi^*)$ replaced by $(u_{N},\nabla\Phi_N)$, we define an operator $\mathscr{T}$:
\begin{align*}
\mathscr{T}: L^2((0,T)\times\Omega)\times L^\infty\big(0,T;W^{4m,\frac{5}{3}}(\Omega)\big) &\rightarrow L^2((0,T)\times\Omega)\times L^\infty\big(0,T;W^{4m,\frac{5}{3}}(\Omega)\big)\\
(\tilde{u},\nabla\Phi^*) &\mapsto (u_{N},\nabla\Phi_N).
\end{align*}
Let $\{(\tilde{u_i},\nabla\Phi^*_i) \}_i$ be a uniform bounded sequence in $L^2((0,T)\times\Omega)\times L^\infty(0,T;W^{4m,\frac{5}{3}}(\Omega))$ and
$(f_{Ni},\rho_{Ni},u_{Ni})$ be the corresponding sequence solutions constructed in Subsection \ref{2.1.1}.
It is easy to see
\begin{align*}
\|\nabla\Phi_{i}^*\|_{L^\infty((0,T)\times\Omega)}\leq
C\|\nabla\Phi_i^*\|_{L^\infty\big(0,T;W^{4m,\frac{5}{3}}(\Omega)\big)}\leq C,
\end{align*}
where $C$ appeared in this subsection is independent of $i$.
With this observation at hand, and combining the inequality \eqref{v23} with $l=2$, we get
\begin{align*}
\left\||v|^2f_{Ni}\right\|_{L^\infty(0,T;L^1(\Omega\times\mathbb{R}^3))}\leq C.
\end{align*}
And in accordance with \eqref{v25} and Lemma \ref{1'}, we derive that
\begin{align}\label{v27}
\Big\|\int_{\mathbb{R}^3}f_{Ni}\,dv \Big\|_{L^\infty\big(0,T;L^\frac{5}{3}(\Omega)\big)}
+\Big\|\int_{\mathbb{R}^3}vf_{Ni}\,dv \Big\|_{L^\infty\big(0,T;L^\frac{5}{4}(\Omega)\big)}\leq C.
\end{align}
From \eqref{15}, we obtain that
\begin{align}\label{v28}
\|\Phi_{Ni}\|_{L^\infty\big(0,T;W^{4m+2,\frac{5}{3}}(\Omega)\big)}\leq C.
\end{align}
Similarly, we can deduce that
\begin{align}\label{v29}
\|\partial_t\Phi_{Ni}\|_{L^\infty\big(0,T;W^{4m+1,\frac{5}{4}}(\Omega)\big)}\leq C,
\end{align}
where we have used \eqref{v24}, \eqref{15} and \eqref{v27}.
The inequalities \eqref{v28} and \eqref{v29} allow us to use Aubin-Lions lemma to get
\begin{align}
\Phi_{Ni}\rightarrow\Phi_N \quad {\rm{in}}\quad C\big([0,T];W^{4m+1,\frac{5}{3}}(\Omega)\big).
\end{align}
For the Navier-Stokes equations, the last term on the right-hand side of the inequality \eqref{31} written with $(f_{Ni},\rho_{Ni},u_{Ni})$ can be calculated as
\begin{align*}
&\int^{\tau}_0\int_{\Omega\times\mathbb{R}^3}f_{Ni}(v-\tilde{u_i})\chi_{\{|\tilde u_i|\leq N\}}\cdot (u_{Ni}-u_\infty)\,dxdvdt\\
\leq\; &\|u_{Ni}-u_\infty\|_{L^5((0,T)\times\Omega)}\Big(\Big\|\int_{\mathbb{R}^3}vf_{Ni}\,dv \Big\|_{L^\frac{5}{4}((0,T)\times\Omega)}
+N \Big\|\int_{\mathbb{R}^3}f_{Ni}\,dv \Big\|_{L^\frac{5}{4}((0,T)\times\Omega)}\Big)\\
\leq \;& C\big(\|u_{Ni}-u_\infty\|_{L^2((0,T)\times\Omega)}+\|\nabla(u_{Ni}-u_\infty)\|_{L^2((0,T)\times\Omega)}\big )\\
\leq \;& \alpha\int_0^\tau\int_\Omega |\nabla(u_{Ni}-u_\infty)|^2\,dxdt+C,
\end{align*}
where $\alpha$ is a small constant. In addition, we notice that (\cite{CJN})
\begin{align*}
\|u_{Ni}-u_\infty\|_{H^1(\Omega)}^2\leq C\|\mathbb{S}(\nabla(u_{Ni}-u_\infty)):\nabla(u_{Ni}-u_\infty)\|_{L^1(\Omega)}.
\end{align*}
Following the same path as in \cite{CJN}, we can deal with the other terms on the right-side hand of the inequality \eqref{31} to deduce that
\begin{align}\label{v210}
\|u_{Ni}\|_{L^2(0,T;H^1(\Omega))}\leq C.
\end{align}
An argument similar to the one used in \cite{LL2} shows that there exists a constant $p\in (1,\infty)$ such that
\begin{align}\label{v211}
\|\partial_t u_{Ni}\|_{L^p(0,T;W^{-1,p}(\Omega))}\leq C,
\end{align}
where we have used \eqref{28}, \eqref{30} and \eqref{31}.
By means of \eqref{v210}, \eqref{v211} and Aubin-Lions lemma, we infer
\begin{align*}
u_{Ni}\rightarrow u_{N}\quad {\rm{in}}\quad L^2((0,T)\times\Omega).
\end{align*}

For any
$(\tilde u,\nabla\Phi^*)\in L^2((0,T)\times\Omega)\times L^\infty\big(0,T;W^{4m,\frac{5}{3}}(\Omega)\big)$ satisfying
$(\tilde u,\nabla\Phi^*)=\sigma\mathscr{T}(\tilde u,\nabla\Phi^*)=\sigma(u_N,\nabla\Phi_N)$, $\sigma\in [0,1]$,
let $(f_N,\rho_N,u_N)$ be the associated solution.
Thanks to \eqref{v24} and \eqref{15}, we can rewrite the last term on the right-hand side of \eqref{v23} with $l=2$ as
\begin{align*}
-2\int_{\Omega\times\mathbb{R}^3}\nabla\Phi^*\cdot vf_N\,dxdv&=-2\sigma\int_\Omega\nabla\Phi_N\cdot\int_{\mathbb{R}^3}vf_N\,dvdx\\
&=-2\sigma\int_\Omega\Phi_N\,\partial_t\int_{\mathbb{R}^3}f_N\,dvdx\\
&=-\sigma\frac{d}{dt}\int_\Omega(\varepsilon|\nabla^{2m+1}\Phi_N |^2+|\nabla\Phi_N|^2)\,dx.
\end{align*}
As a consequence, we have
\begin{align*}
&\frac{d}{dt}\int_{\Omega}\Big(\frac{\sigma}{2}\varepsilon|\nabla^{2m+1}\Phi_N|^2
+\frac{\sigma}{2}|\nabla\Phi_N|^2+\int_{\mathbb{R}^3}\frac{|v|^2}{2}f_N\,dv\Big) \,dx
+\int_{\Sigma^\pm}(v\cdot\nu(x))\frac{|v|^2}{2}\gamma^\pm f_N\,d\sigma(x)dv\\
\leq\; & 3\int_{\Omega\times\mathbb{R}^3}f_N\,dxdv
-\int_{\Omega\times\mathbb{R}^3}|v|^2f_N\,dxdv+\int_{\Omega\times\mathbb{R}^3}\tilde u\chi_{\{|\tilde u|\leq \lambda \}}\cdot vf_N\,dxdv.
\end{align*}
It is apparent from this inequality and Gronwall inequality that
\begin{align}\label{v212}
\left\||v|^2f_N\right\|_{L^\infty(0,T;L^1(\Omega\times\mathbb{R}^3))}\leq \tilde C,
\end{align}
where $\tilde C$ is a positive constant independent of $\sigma$.
From \eqref{v25}, \eqref{15}, \eqref{v212} and Lemma \ref{1'}, we get
\begin{align}
\|\nabla\Phi_N\|_{L^\infty\big(0,T;W^{4m+1,\frac{5}{3}}(\Omega)\big)}\leq \tilde C.
\end{align}
Therefore, it holds
\begin{align*}
\|\nabla\Phi^*\|_{L^\infty\big(0,T;W^{4m,\frac{5}{3}}(\Omega)\big)}
\leq \sigma\|\nabla\Phi_N\|_{L^\infty\big(0,T;W^{4m,\frac{5}{3}}(\Omega)\big)}\leq \tilde C.
\end{align*}
We perform the same reasoning as that in the proof of \eqref{v210} to deduce that
\begin{align*}
\|\tilde u\|_{L^2((0,T)\times\Omega)}\leq \sigma\|u_N\|_{L^2((0,T)\times\Omega)}\leq \tilde C.
\end{align*}
It is possible to use Lemma \ref{LS} to yield that there exist $(u_N,\nabla\Phi_N)$ such that
$(u_N,\nabla\Phi_N)=\mathscr{T}(u_N,\nabla\Phi_N)$.
We are led to the conclusion that $(f_N,\Phi_{N},\rho_{N},u_{N})$ satisfies\\
1. for any $\varphi\in C^\infty_c([0,T)\times\bar\Omega\times\mathbb{R}^3)$ such that $\varphi=0$ on $(0,T)\times\Sigma^+$, it holds
\begin{align}\label{32}
&\int^{T}_0\int_{\Omega\times\mathbb{R}^3}f_N\big(\partial_t\varphi+v\cdot\nabla_x\varphi
+(u_{N}\chi_{\{|u_{N}|\leq N\}}-v)\cdot\nabla_v\varphi
-\nabla_x\Phi_{N}\cdot\nabla_v\varphi
+\Delta_v\varphi\big)\,dxdvdt\nonumber\\
&=-\int_{\Omega\times\mathbb{R}^3}f_{0\varepsilon}\varphi(0,x,v)\,dxdv
+\int_0^T\int_{\Sigma^-}(v\cdot \nu(x))g_\varepsilon\varphi\,d\sigma(x)dvdt;
\end{align}
2. for any $\Psi\in C_c^\infty((0,T)\times\Omega)$, it holds
\begin{align}\label{33}
&\int_0^T\int_{\Omega}\nabla\Phi_{N}\cdot \nabla\Psi\,dxdt
+\varepsilon\int_0^T\int_\Omega\nabla^{2m+1}\Phi_{N}\cdot\nabla^{2m+1}\Psi\,dxdt\nonumber\\
&=\int_0^T\int_\Omega\big(n_{N}-c(x) \big)\Psi\,dxdt,
\end{align}
where $n_{N}:=\int_{\mathbb{R}^3}f_{N}\,dv$;\\
3. for any $\psi\in C^\infty_c([0,T)\times(\Omega\cup\Gamma_{\rm{in}}))$, it holds
\begin{align}\label{34}
&\int^{T}_0\int_{\Omega}(\rho_{N}\partial_t\psi+\rho_{N} u_{N}\cdot\nabla\psi-\varepsilon\nabla\rho_{N}\cdot \nabla\psi)\,dxdt
+\int_{\Omega}\rho_0\psi(0,x)\,dx\nonumber\\
&\qquad\qquad\qquad=\int_0^T\int_{\Gamma_{\rm{in}}}\rho_B u_B\cdot \nu(x)\varphi\,d\sigma(x)dt;
\end{align}
4. for any $\phi\in C^\infty_c((0,T)\times\Omega;\mathbb{R}^3)$, it holds
\begin{align}\label{35}
\int^{T}_{0}&\int_{\Omega}\big(\rho_{N} u_{N}\cdot\partial_t\phi+(\rho_{N} u_{N}\otimes u_{N}):\nabla\phi+\rho_{N}^\gamma{\rm{div}}\phi
+\delta\rho_{N}^\beta{\rm{div}}\phi
-\varepsilon\nabla\rho_{N}\cdot\nabla u_{N}\phi\nonumber\\
&-\varepsilon|\nabla(u_{N}-u_\infty)|^2\nabla(u_{N}-u_\infty):\nabla\phi
-\mathbb{S}(\nabla u_{N}):\nabla\phi
+(j_N-n_N u_{N})\chi_{\{|u_{N}|\leq N\}}\cdot\phi\big)\,dxdt\nonumber\\
&+\int_{\Omega}\rho_0u_0\cdot \phi(0,x)\,dx=0,
\end{align}
where $j_N:=\int_{\mathbb{R}^3}vf_N\,dv$.

Moreover, the energy inequalities \eqref{v23} and \eqref{31} with $(\tilde u,\nabla\Phi^*)$ replaced by $(u_{N},\nabla\Phi_N)$ hold.

\subsection{Uniform estimates independent of $N$}\label{uN}
In order to take the limit on the regularized paremeter $N$,
we derive the uniform estimates satisfied by $(f_N,\Phi_{N},\rho_{N},u_{N})$ in this subsection.
Summing \eqref{v23} with $l=2$ and \eqref{31} with $(\tilde u,\nabla\Phi^*)=(u_{N},\nabla\Phi_N)$ up, we have
\begin{align}\label{e2}
&\int_{\Omega}\Big(\frac{1}{2}\rho_{N}|u_{N}-u_\infty|^2+\frac{1}{\gamma-1}\rho_{N}^{\gamma}
+\frac{\delta}{\beta-1}\rho_{N}^{\beta}+\frac{1}{2}\rho_{N}^{2}
+\frac{\varepsilon}{2}|\nabla^{2m+1}\Phi|^2+\frac{1}{2}|\nabla\Phi|^2
+\int_{\mathbb{R}^3}\frac{|v|^2}{2}f_N\,dv\Big)\,dx\nonumber\\
&+\varepsilon\int_0^{\tau}\int_{\Omega}|\nabla\rho_{N}|^2\,dxdt
+\frac{1}{2}\int^\tau_0\int_{\partial\Omega}\rho_{N}^2|u_B\cdot \nu(x)|\,d\sigma(x)dt\nonumber\\
&+\int_0^{\tau}\int_{\Omega}\mathbb{S}(\nabla(u-u_\infty)):\nabla(u-u_\infty)\,dxdt
+\int_0^\tau\int_{\Gamma_{\rm{out}}}\Big(\frac{1}{\gamma-1}\rho_{N}^\gamma+\frac{\delta}{\beta-1}\rho_{N}^\beta\Big)|u_B\cdot\nu(x)|\,d\sigma(x)dt
\nonumber\\
&+\varepsilon\int^{\tau}_0\int_{\Omega}(\gamma\rho_{N}^{\gamma-2}+\delta\beta\rho_{N}^{\beta-2})|\nabla\rho_{N}|^2\,dxdt
+\int^{\tau}_0\int_{\Sigma^-}(v\cdot \nu(x))\frac{|v|^2}{2}g_\varepsilon\,d\sigma(x)dvdt\nonumber\\
&+\varepsilon\int_0^{\tau}\int_{\Omega}|\nabla(u_{N}-u_\infty)|^4\,dxdt
+\int_0^\tau\int_{\Gamma_{\rm{in}}}(\rho_{N}^\gamma+\delta\rho_{N}^\beta)|u_B\cdot\nu(x)|\,d\sigma(x)dt\nonumber\\
\leq\;&\int_{\Omega}\Big(\frac{1}{2}\rho_0|u_0-u_\infty|^2+\frac{1}{\gamma-1}\rho^{\gamma}_0
+\frac{\delta}{\beta-1}\rho_0^{\beta}+\frac{1}{2}\rho_0^2
+\frac{\varepsilon}{2}|\nabla^{2m+1}\Phi_0|^2+\frac{1}{2}|\nabla\Phi_0|^2
+\int_{\mathbb{R}^3}\frac{|v|^2}{2}f_{0\varepsilon}\,dv\Big)\,dx \nonumber\\
&+\int^{\tau}_0\int_{\Gamma_{\rm{in}}}\Big(\frac{\gamma}{\gamma-1}\rho_{N}^{\gamma-1}
+\frac{\delta\beta}{\beta-1}\rho_{N}^{\beta-1}\Big)\rho_B|u_B\cdot \nu(x)|\,d\sigma(x)dt\,d\sigma(x)dt
-\frac{1}{2}\int^{\tau}_0\int_{\Omega}\rho_{N}^2{\rm{div}}u_{N}\,dxdt\nonumber\\
&+\int^{\tau}_0\int_{\Omega}\big(-(\rho_{N}^{\gamma}+\delta\rho_{N}^{\beta}){\rm{div}}u_\infty
-\mathbb{S}(\nabla u_\infty):\nabla(u_{N}-u_\infty)
-\rho_{N} u_{N}\cdot\nabla u_\infty\cdot(u_{N}-u_\infty)\nonumber\\
&\qquad +\varepsilon\nabla\rho_{N}\cdot\nabla(u_{N}-u_\infty)\cdot u_\infty  \big)\,dxdt
-\int^{\tau}_0\int_{\Omega}(j_N-n_Nu_{N})\chi_{\{|u_{N}|\leq N\}}\cdot u_\infty\,dxdt\nonumber\\
&+3\int^{\tau}_0\int_{\Omega\times\mathbb{R}^3}f_N\,dxdvdt
+\int_0^\tau\int_{\Gamma_{\rm{in}}}\rho_N\rho_B|u_B\cdot \nu(x)|\,d\sigma(x)dt.
\end{align}
Using the same arguments as that in \cite{LL2}, we can carry out that the external force term of the energy inequality \eqref{e2} can be calculated as
\begin{align}\label{100}
&\Big|\int_0^\tau\int_\Omega(j_N-n_Nu_{N})\chi_{\{|u_{N}|\leq N\}}\cdot u_\infty\,dxdt \Big|\nonumber\\
\leq &\alpha\int_0^\tau\int_{\Omega}|\nabla(u_{N}-u_\infty)|^2\,dxdt
+\tilde C\int_0^\tau\int_{\Omega\times\mathbb{R}^3}|v|^2f_N\,dxdvdt+\tilde C,
\end{align}
where $\tilde C>0$ is independent of $N$ and $\varepsilon$.

Inserting \eqref{100} into the energy inequality \eqref{e2}, we obtain that
\begin{align}
&\|f_N\|_{L^\infty(0,T;L^1(\Omega\times\mathbb{R}^3))}+\|f_N\|_{L^\infty(0,T;L^\infty(\Omega\times\mathbb{R}^3))}
+\||v|^2f_N\|_{L^\infty(0,T;L^1(\Omega\times\mathbb{R}^3))}\leq C,\label{40}\\
&\|\rho_{N}|u_{N}-u_\infty|^2\|_{L^\infty(0,T;L^1(\Omega))}
+\|u_{N}-u_\infty\|_{L^2(0,T;H^1(\Omega))}\leq C,\label{41}\\
&\|\rho_{N}\|_{L^\infty(0,T;L^\beta(\Omega))}
+\varepsilon\|\nabla(u_{N}-u_\infty)\|^4_{L^4(0,T;L^4(\Omega))}\leq C,\label{42}\\
&\varepsilon\|\nabla\rho_{N}\|^2_{L^2(0,T;L^2(\Omega))}
+\varepsilon\|\nabla(\rho_{N}^{\frac{\beta}{2}})\|^2_{L^2(0,T;L^2(\Omega))}\leq C,\label{43}\\
&\Big\|\int_{\mathbb{R}^3}f_N\,dv\Big\|_{L^\infty(0,T;L^p(\Omega))}\leq C, \quad p\in\Big[1,\frac{5}{3}\Big],\label{113}\\
&\Big\|\int_{\mathbb{R}^3}vf_N\,dv\Big\|_{L^\infty(0,T;L^p(\Omega))}\leq C, \quad p\in\Big[1,\frac{5}{4}\Big],\label{114}
\end{align}
where $C>0$ is independent of $N$, but depends on $\varepsilon$.

\subsection{Taking the limit $N\rightarrow +\infty$}\label{lN}
Based on the uniform bounds obtained in Subsection \ref{uN}, we take the limit $N\rightarrow +\infty$ in the equalities \eqref{32}-\eqref{35}.
The procedure of passing to the limit in the fluid equations \eqref{34}-\eqref{35} can be found in \cite{CJN} and so is omitted.
Here we only list the key points of handling with the kinetic system for brevity.
\begin{prop}\label{p2}
There exists a subsequence $(f_N,\Phi_{N})$ (not relabeled) satisfying
\begin{align}
&f_N\stackrel{*}{\rightharpoonup}f \quad {\rm{in}}\quad L^\infty(0,T;L^p(\Omega\times\mathbb{R}^3)),\quad 1<p\leq\infty,\label{20}\\
&\int_{\mathbb{R}^3}f_N\,dv \stackrel{*}{\rightharpoonup}\int_{\mathbb{R}^3}f\,dv \quad {\rm{in}} \quad L^\infty(0,T;L^p(\Omega)),\quad
p\in \Big[1,\frac{5}{3} \Big],\label{21}\\
&\int_{\mathbb{R}^3}vf_N\,dv \stackrel{*}{\rightharpoonup}\int_{\mathbb{R}^3}vf\,dv \quad {\rm{in}} \quad L^\infty(0,T;L^p(\Omega)),\quad
p\in \Big[1,\frac{5}{4} \Big],\label{22}\\
&\Phi_{N}\rightarrow \Phi \quad {\rm{in}}\quad C([0,T];W^{1,q}(\Omega)),\quad q\in\Big[1,\frac{15}{4} \Big),\label{23}\\
&\nabla^{2m+1}\Phi_{N}\rightarrow \nabla^{2m+1}\Phi \quad {\rm{in}}\quad C([0,T];W^{1,q}(\Omega)),\quad q\in\Big[1,\frac{15}{4} \Big).\label{101}
\end{align}
\end{prop}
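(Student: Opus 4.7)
The plan is to combine Banach--Alaoglu applied to the uniform bounds from Subsection \ref{uN} with an Aubin--Lions argument on the regularized Poisson equation \eqref{15} to obtain all five convergences.

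First I would derive the weak-$*$ convergences \eqref{20}--\eqref{22}. The bound \eqref{40} together with interpolation gives $\|f_N\|_{L^\infty(0,T;L^p(\Omega\times\mathbb{R}^3))}\le C$ for every $1\le p\le\infty$, so Banach--Alaoglu furnishes a (not relabelled) subsequence and a limit $f$ such that \eqref{20} holds. The bounds \eqref{113}--\eqref{114}, also uniform in $N$, similarly produce weak-$*$ limits $n$ and $j$ of $\int_{\mathbb{R}^3}f_N\,dv$ and $\int_{\mathbb{R}^3}vf_N\,dv$ in the stated spaces.

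The identification $n=\int_{\mathbb{R}^3}f\,dv$ and $j=\int_{\mathbb{R}^3}vf\,dv$ is the one nontrivial point. Given $\varphi\in L^1((0,T)\times\Omega)$ and a smooth cutoff $\chi_R(v)$ equal to one on $\{|v|\le R\}$ and supported in $\{|v|\le 2R\}$, I would split
\begin{equation*}
\int_0^T\!\!\int_{\Omega}\Big(\!\int_{\mathbb{R}^3}\!f_N\,dv\Big)\varphi\,dxdt
=\int_0^T\!\!\int_{\Omega\times\mathbb{R}^3}\!f_N\,\varphi\,\chi_R\,dxdvdt
+\int_0^T\!\!\int_{\Omega\times\mathbb{R}^3}\!f_N\,\varphi\,(1-\chi_R)\,dxdvdt,
\end{equation*}
pass to the limit $N\to\infty$ in the first term using \eqref{20} (since $\varphi\chi_R$ is admissible), and control the remainder with Chebyshev via the quadratic-moment part of \eqref{40},
\begin{equation*}
\Big|\int_0^T\!\!\int_{\Omega\times\mathbb{R}^3}\!f_N\,\varphi\,(1-\chi_R)\,dxdvdt\Big|
\le\frac{1}{R^2}\big\||v|^2f_N\big\|_{L^\infty(L^1)}\|\varphi\|_{L^1}.
\end{equation*}
Letting $R\to\infty$ identifies $n$ with $\int f\,dv$; a parallel argument with the weight $R^{-1}$ on the tail identifies $j$ with $\int vf\,dv$.

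For \eqref{23}--\eqref{101} I would invoke Aubin--Lions. The elliptic estimates \eqref{v28}--\eqref{v29} applied to the present $(f_N,\Phi_N)$ yield
\begin{equation*}
\|\Phi_N\|_{L^\infty(0,T;W^{4m+2,5/3}(\Omega))}
+\|\partial_t\Phi_N\|_{L^\infty(0,T;W^{4m+1,5/4}(\Omega))}\le C,
\end{equation*}
where $\partial_t n_N=-\mathrm{div}_x j_N$ from \eqref{v24} is what drives the time-derivative bound. Since $W^{4m+2,5/3}(\Omega)\hookrightarrow\hookrightarrow W^{4m+1,q}(\Omega)$ is compact for every $q<15/4$ by Rellich--Kondrachov (the critical relation being $\tfrac{3}{5}-\tfrac{1}{3}=\tfrac{4}{15}$), Aubin--Lions produces $\Phi_N\to\Phi$ strongly in $C([0,T];W^{4m+1,q}(\Omega))$ for every $q\in[1,15/4)$. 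As $m$ is chosen large, $4m+1\ge 2m+2$, and this single strong convergence yields both \eqref{23} and the higher-order statement \eqref{101}.

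The main obstacle is precisely the identification of the weak limits of the velocity moments: the natural test functions $\varphi(t,x)$ do not decay in $v$ and therefore cannot be paired directly with the weak-$*$ convergence of $f_N$ on $\Omega\times\mathbb{R}^3$. Once the truncation--Chebyshev argument above handles this, the remaining steps are routine applications of polyharmonic elliptic regularity and the compact Sobolev embedding.
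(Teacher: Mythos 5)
Your proposal is correct and follows essentially the same route as the paper: weak-$*$ compactness from the $N$-uniform bounds \eqref{40}, \eqref{113}, \eqref{114} for the kinetic quantities, and an Aubin--Lions argument on the regularized Poisson equation driven by $\partial_t n_N=-\mathrm{div}_x j_N$ for the potential; the only cosmetic difference is that the paper performs the elliptic estimates at order $W^{2,p}$ and $W^{1,p}$ (its \eqref{116}--\eqref{117}) and handles \eqref{101} ``similarly'', whereas you invoke the full polyharmonic regularity $W^{4m+2,5/3}$ in one step, and you spell out the moment-identification that the paper leaves implicit. One small slip in that identification: the tail bound cannot carry the factor $\|\varphi\|_{L^1}$, since $\||v|^2f_N\|_{L^\infty_tL^1_{x,v}}$ pairs with $\varphi$ only through $\|\varphi\|_{L^\infty}$; this is harmless because it suffices to test against the dense class of bounded (e.g.\ smooth compactly supported) $\varphi$.
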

\begin{proof}
It is apparent from \eqref{40}, \eqref{113} and \eqref{114} that \eqref{20}-\eqref{22} hold. It only remains to prove \eqref{23} and \eqref{101}.

Using \eqref{15}, we have
\begin{align*}
-\Delta \partial_t\Phi_{N}-\varepsilon\Delta^{2m+1}\partial_t\Phi_{N}=-{\rm{div}}_x\int_{\mathbb{R}^3}vf_{N}\,dv.
\end{align*}
Meanwhile, by means of the inequality \eqref{114}, it holds
\begin{align}\label{116}
\|\partial_t\Phi_{N}\|_{L^\infty(0,T;W^{1,p}(\Omega))}\leq C,\quad p\in \Big(1,\frac{5}{4} \Big].
\end{align}
Proceeding as that in the proof of \eqref{116}, we obtain
\begin{align}\label{117}
\|\Phi_{N}\|_{L^\infty(0,T;W^{2,p}(\Omega))}\leq C,\quad p\in \Big(1,\frac{5}{3} \Big].
\end{align}
The estimates \eqref{116} and \eqref{117} allow us to use Aubin-Lions lemma to get \eqref{23}.

Similarly, we can obtain \eqref{101}.
\end{proof}
The convergence stated in Proposition \ref{p2} makes it possible to perform the limit $N\rightarrow +\infty$
in the equalites \eqref{32}-\eqref{33}.
Thanks to the weak lower semicontinuity of convex functions, we are able to pass to the limit $N\rightarrow+\infty$ in the energy inequality \eqref{e2}. In conclusion, we have proved the following proposition.
\begin{prop}\label{p1}
The problem \eqref{3}-\eqref{10} has a global weak solution $(f,\Phi,\rho,u)$ satisfying, for any $\tau\in(0,T)$,
\begin{align}\label{e1}
&\int_{\Omega}\Big(\frac{1}{2}\rho|u-u_\infty|^2+\frac{1}{\gamma-1}\rho^{\gamma}
+\frac{\delta}{\beta-1}\rho^{\beta}+\frac{1}{2}\rho^{2}
+\frac{1}{2}|\nabla\Phi|^2+\frac{\varepsilon}{2}|\nabla^{2m+1}\Phi|^2
+\int_{\mathbb{R}^3}\frac{|v|^2}{2}f\,dv\Big)\,dx\nonumber\\
&+\varepsilon\int_0^{\tau}\int_{\Omega}|\nabla\rho|^2\,dxdt
+\varepsilon\int^{\tau}_0\int_{\Omega}(\gamma\rho^{\gamma-2}+\delta\beta\rho^{\beta-2})|\nabla\rho|^2\,dxdt
+\frac{1}{2}\int^\tau_0\int_{\partial\Omega}\rho^2|u_B\cdot \nu(x)|\,d\sigma(x)dt  \nonumber\\
&+\int_0^{\tau}\int_{\Omega}\mathbb{S}(\nabla(u-u_\infty)):\nabla(u-u_\infty)\,dxdt
+\varepsilon\int_0^{\tau}\int_{\Omega}|\nabla(u-u_\infty)|^4\,dxdt
\nonumber\\
&+\int_0^\tau\int_{\Gamma_{\rm{in}}}(\rho^\gamma+\delta\rho^\beta)|u_B\cdot\nu(x)|\,d\sigma(x)dt
+\int_0^\tau\int_{\Gamma_{\rm{out}}}\Big(\frac{1}{\gamma-1}\rho^\gamma+\frac{\delta}{\beta-1}\rho^\beta \Big)|u_B\cdot\nu(x)|\,d\sigma(x)dt\nonumber\\
&+\int^{\tau}_0\int_{\Sigma^-}(v\cdot \nu(x))\frac{|v|^2}{2}g_\varepsilon\,d\sigma(x)dvdt
-\int^\tau_0\int_{\Gamma_{\rm{in}}}\rho\rho_B|u_B\cdot \nu(x)|\,d\sigma(x)dt\nonumber\\
\leq\;&\int_{\Omega}\Big(\frac{1}{2}\rho_0|u_0-u_\infty|^2+\frac{1}{\gamma-1}\rho^{\gamma}_0
+\frac{\delta}{\beta-1}\rho_0^{\beta}+\frac{1}{2}\rho_0^2
+\frac{1}{2}|\nabla\Phi_0|^2+\frac{\varepsilon}{2}|\nabla^{2m+1}\Phi_0|^2
+\int_{\mathbb{R}^3}\frac{|v|^2}{2}f_{0\varepsilon}\,dv\Big)\,dx \nonumber\\
&-\frac{1}{2}\int^{\tau}_0\int_{\Omega}\rho^2{\rm{div}}u\,dxdt
-\int^{\tau}_0\int_{\Omega}(j-nu)\cdot u_\infty\,dxdt
+3\int_{0}^\tau\int_{\Omega\times\mathbb{R}^3}f\,dxdvdt\nonumber\\
&+\int^{\tau}_0\int_{\Omega}\big(-(\rho^{\gamma}+\delta\rho^{\beta}){\rm{div}}u_\infty
-\mathbb{S}(\nabla u_\infty):\nabla(u-u_\infty)
-\rho u\cdot\nabla u_\infty\cdot(u-u_\infty)\nonumber\\
&\qquad +\varepsilon\nabla\rho\cdot\nabla(u-u_\infty)\cdot u_\infty  \big)\,dxdt
+\int_0^\tau\int_{\Gamma_{\rm{in}}}\Big(\frac{\gamma}{\gamma-1}\rho^{\gamma-1}
+\frac{\delta\beta}{\beta-1}\rho^{\beta-1} \Big)\rho_B|u_B\cdot\nu(x)|\,d\sigma(x)dt.
\end{align}
\end{prop}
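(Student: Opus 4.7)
The plan is to pass to the limit $N\to\infty$ in the weak formulations \eqref{32}--\eqref{35} and in the energy inequality \eqref{e2}, relying on the uniform bounds \eqref{40}--\eqref{114} from Subsection \ref{uN} (which are independent of $N$, though dependent on $\varepsilon$), together with the convergences established in Proposition \ref{p2}. Since the Aubin-Lions argument for the fluid part already appears in the proof of \eqref{v210}--\eqref{v211}, up to extraction of a further subsequence we may assume $\rho_N\to\rho$ and $u_N\to u$ strongly in suitable Lebesgue spaces, and the cut-off $\chi_{\{|u_N|\le N\}}\to 1$ almost everywhere as $N\to\infty$.

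First I would pass to the limit in the Vlasov-Fokker-Planck weak formulation \eqref{32}. The linear terms are handled by the weak-$*$ convergence \eqref{20}. For $f_N\nabla_x\Phi_N\cdot\nabla_v\varphi$, the strong convergence \eqref{23} of $\nabla_x\Phi_N$ combines with the weak-$*$ convergence of $f_N$; similarly, for $(u_N\chi_{\{|u_N|\le N\}}-v)f_N\cdot\nabla_v\varphi$, the strong convergence of $u_N$ in $L^2((0,T)\times\Omega)$ together with $\chi_{\{|u_N|\le N\}}\to 1$ and the $L^\infty$ bound \eqref{40} on $f_N$ allows me to identify the limit using a standard dominated convergence argument on the compact support of $\varphi$. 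The boundary and initial traces pass naturally due to the uniform bounds. For the Poisson equation \eqref{33}, the convergences \eqref{23} and \eqref{101} yield the limit directly, together with \eqref{21} for the source term.

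For the continuity and momentum equations \eqref{34}--\eqref{35}, I would follow the strategy of \cite{CJN} verbatim: the artificial viscosity $\varepsilon\Delta\rho$ gives $\nabla\rho_N$ bounded in $L^2$ (see \eqref{43}), which produces strong $L^2$ convergence of $\rho_N$; the $\delta\rho^\beta$ pressure regularization together with the artificial dissipation $\varepsilon\operatorname{div}(|\nabla(u-u_\infty)|^2\nabla(u-u_\infty))$ allows passage to the limit in the convective term $\rho_N u_N\otimes u_N$ and in the pressure $\rho_N^\gamma+\delta\rho_N^\beta$. The Vlasov-fluid coupling $(j_N-n_Nu_N)\chi_{\{|u_N|\le N\}}$ is handled using \eqref{113}--\eqref{114} and the strong convergence of $u_N$, together with $\chi_{\{|u_N|\le N\}}\to 1$ a.e. on the set where $u$ is finite.

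Finally, to obtain the energy inequality \eqref{e1}, I would invoke weak lower semicontinuity for every convex term on the left-hand side (the kinetic energy $\frac{1}{2}\rho|u-u_\infty|^2$ via Fatou-type arguments, the pressure terms $\rho^\gamma$, $\rho^\beta$, $\rho^2$, the dissipation $\mathbb{S}(\nabla(u-u_\infty)):\nabla(u-u_\infty)$, the quartic dissipation $|\nabla(u-u_\infty)|^4$, the gradient terms $|\nabla\rho|^2$, $|\nabla\Phi|^2$, $|\nabla^{2m+1}\Phi|^2$, and the $|v|^2f$ moment). The right-hand side terms pass using strong convergences (of $\rho_N$, $u_N$, $\Phi_N$) or weak convergence against strongly convergent test objects; the coupling $\int(j_N-n_N u_N)\chi_{\{|u_N|\le N\}}\cdot u_\infty$ is treated as in \eqref{100}. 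The most delicate point is the joint passage to the limit in the coupled force terms, where strong convergence of $u_N$ (required to identify $n_N u_N \to nu$) must be reconciled with the weak-$*$ convergence of the kinetic moments \eqref{21}--\eqref{22}; this is precisely where the Aubin-Lions compactness on $u_N$ derived from \eqref{v210}--\eqref{v211} becomes indispensable.
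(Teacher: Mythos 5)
Your proposal follows essentially the same route as the paper: uniform-in-$N$ estimates from Subsection \ref{uN}, the convergences of Proposition \ref{p2} (with Aubin--Lions compactness for $\Phi_N$ and for $u_N$ via \eqref{v210}--\eqref{v211}) to pass to the limit in \eqref{32}--\eqref{35}, the citation of \cite{CJN} for the fluid equations, and weak lower semicontinuity of the convex terms for the energy inequality \eqref{e2}$\to$\eqref{e1}. The extra detail you supply on the coupling terms (the cut-off $\chi_{\{|u_N|\le N\}}\to 1$ a.e.\ and the identification of $n_Nu_N\to nu$) only makes explicit what the paper leaves implicit.
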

\begin{remark}
The weak formulation of momentum equation \eqref{6} reads as
\begin{align*}
\int^{T}_{0}&\int_{\Omega}\big(\rho u\cdot\partial_t\phi+(\rho u\otimes u):\nabla\phi+\rho^\gamma{\rm{div}}\phi
+\delta\rho^\beta{\rm{div}}\phi
-\varepsilon\nabla\rho\cdot\nabla u\cdot\phi\nonumber\\
&-Z_\varepsilon:\nabla\phi
-\mathbb{S}(\nabla u):\nabla\phi
+(j-nu)\cdot\phi\big)\,dxdt
+\int_{\Omega}\rho_0u_0\cdot \phi(0,x)\,dx=0,
\end{align*}
for any $\phi\in C_c^\infty((0,T)\times\Omega;\mathbb{R}^3)$.
Here $Z_\varepsilon$ is the weak limit of the term $\varepsilon|\nabla(u_{N}-u_\infty)|^2\nabla(u_{N}-u_\infty)$ in the equality \eqref{35}, that is,
\begin{align*}
\varepsilon|\nabla(u_{N}-u_\infty)|^2\nabla(u_{N}-u_\infty)\rightharpoonup Z_\varepsilon \quad
{\rm{in}} \quad L^\frac{4}{3}((0,T)\times\Omega),
\end{align*}
and
\begin{align*}
\|Z_\varepsilon\|_{L^\frac{4}{3}((0,T)\times\Omega)}\rightarrow 0 \quad {\rm{as}}\quad \varepsilon\rightarrow 0,
\end{align*}
where we have used the inequality \eqref{42}.
\end{remark}

\section{Vanishing limits and the proof of Theorem \ref{main}}\label{l}
Our ultimate goal is to take limits on $\varepsilon$ and $\delta$ in turn to get the desired existence result.

\subsection{Taking the limit $\varepsilon\rightarrow 0$}\label{le}
We use $(f_\varepsilon,\Phi_\varepsilon,\rho_\varepsilon,u_\varepsilon)$ to denote the approximate solutions of this level.
With the help of regularized technique of the continuity equation, we can take limit in the weak formulation of fluid system
\eqref{5}-\eqref{7}, \eqref{9} and \eqref{10}. For a rigorous proof the reader is referred to \cite{CJN}.
Using the same arguments as that in Subsection \ref{lN}, we can easily
take limit $\varepsilon\rightarrow 0$ in the weak formulation of the problem
\eqref{3}, \eqref{4}, \eqref{7}-\eqref{8'}. Here we only list the difference.
Owing to the inequality \eqref{e1} written with $(f_\varepsilon,\Phi_\varepsilon,\rho_\varepsilon,u_\varepsilon)$
and Gronwall inequality, we derive that
\begin{align*}
\|\nabla\Phi_\varepsilon\|_{L^\infty(0,T;L^2(\Omega))}
+\sqrt{\varepsilon}\|\nabla^{2m+1}\Phi_\varepsilon\|_{L^\infty(0,T;L^2(\Omega))}\leq C,
\end{align*}
where $C>0$ appeared in this subsection is independent of $\varepsilon$. With this observation at hand, we infer that
\begin{align*}
\varepsilon\int_0^T\int_{\Omega}\nabla^{2m+1}\Phi_{\varepsilon}\cdot\nabla^{2m+1}\Psi\,dxdt
&\leq \varepsilon\|\nabla^{2m+1}\Phi_{\varepsilon}\|_{L^2((0,T)\times\Omega)}\|\nabla^{2m+1}\Psi\|_{L^2((0,T)\times\Omega)}\\
&\leq C\sqrt{\varepsilon}\rightarrow 0\quad {\rm{as}}\quad \varepsilon\rightarrow 0.
\end{align*}
Thus we can summarize what we have proved as the following proposition.
\begin{prop}
There exists a global weak solution $(f,\Phi,\rho,u)$ to the following system:
\begin{align}
&\partial_t f+v\cdot \nabla_x f
+{\rm{div}}_v((u-v)f)-\nabla_x\Phi\cdot\nabla_vf-\Delta_vf=0, \label{54}\\
&-\Delta\Phi=\int_{\mathbb{R}^3}f\,dv-c(x),\label{55}\\
&\partial_t\rho+{\rm{div}}_x(\rho u)=0, \label{56}\\
&\partial_t(\rho u)+{\rm{div}}(\rho u\otimes u)+\nabla\rho^{\gamma}+\delta\nabla\rho^{\beta}={\rm{div}}\mathbb{S}(\nabla u)
-\int_{\mathbb{R}^3}(u-v)f\,dv, \label{57}
\end{align}
with
\begin{align*}
&f(0,x,v)=f_{0}(x,v),\quad \rho(0,x)=\rho_0(x),\quad u(0,x)=u_0(x),\\
&\gamma^-f(t,x,v)\big|_{(0,T)\times\Sigma^-}=g(t,x,v),
\quad \Phi(t,x)\big|_{(0,T)\times\partial\Omega}=0,\\
&\rho(t,x)\big|_{(0,T)\times\Gamma_{\rm{in}}}=\rho_B(x),\quad
u(t,x)\big|_{(0,T)\times\partial\Omega}=u_B(x),
\end{align*}
where $(f_0,\rho_0,u_0)$ and $(g,\rho_B,u_B)$ satisfy \eqref{fv}-\eqref{mv} and \eqref{11}.
In addition, $(f,\Phi,\rho,u)$ verifies
\begin{align}\label{e3}
&\int_{\Omega}\Big(\frac{1}{2}\rho|u-u_\infty|^2+\frac{1}{\gamma-1}\rho^{\gamma}
+\frac{\delta}{\beta-1}\rho^{\beta}
+\frac{1}{2}|\nabla\Phi|^2
+\int_{\mathbb{R}^3}\frac{|v|^2}{2}f\,dv\Big)\,dx\nonumber\\
&+\int_0^{\tau}\int_{\Omega}\mathbb{S}(\nabla(u-u_\infty)):\nabla(u-u_\infty)\,dxdt
+\int^{\tau}_0\int_{\Sigma^-}(v\cdot \nu(x))\frac{|v|^2}{2}g\,d\sigma(x)dvdt\nonumber\\
&+\int_0^\tau\int_{\Gamma_{\rm{in}}}(\rho^\gamma+\delta\rho^\beta)|u_B\cdot\nu(x)|\,d\sigma(x)dt
+\int_0^\tau\int_{\Gamma_{\rm{out}}}\Big(\frac{1}{\gamma-1}\rho^\gamma+\frac{\delta}{\beta-1}\rho^\beta \Big)|u_B\cdot\nu(x)|\,d\sigma(x)dt\nonumber\\
\leq\;&\int_{\Omega}\Big(\frac{1}{2}\rho_0|u_0-u_\infty|^2+\frac{1}{\gamma-1}\rho^{\gamma}_0
+\frac{\delta}{\beta-1}\rho_0^{\beta}
+\frac{1}{2}|\nabla\Phi_0|^2
+\int_{\mathbb{R}^3}\frac{|v|^2}{2}f_{0}\,dv\Big)\,dx \nonumber\\
&+\int^{\tau}_0\int_{\Gamma_{\rm{in}}}\Big(\frac{\gamma}{\gamma-1}\rho^{\gamma-1}
+\frac{\delta\beta}{\beta-1}\rho^{\beta-1}\Big)\rho_B|u_B\cdot \nu(x)|\,d\sigma(x)dt
+3\int^{\tau}_0\int_{\Omega\times\mathbb{R}^3}f\,dxdvdt
\nonumber\\
&+\int^{\tau}_0\int_{\Omega}\big(-(\rho^{\gamma}+\delta\rho^{\beta}){\rm{div}}u_\infty
-\mathbb{S}(\nabla u_\infty):\nabla(u-u_\infty)
-\rho u\cdot\nabla u_\infty\cdot(u-u_\infty) \big)\,dxdt\nonumber\\
&-\int^{\tau}_0\int_{\Omega}(j-nu)\cdot u_\infty\,dxdt.
\end{align}
\end{prop}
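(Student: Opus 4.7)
The plan is to pass to the limit $\varepsilon \to 0$ in the $\varepsilon$-level weak solutions $(f_\varepsilon,\Phi_\varepsilon,\rho_\varepsilon,u_\varepsilon)$ produced by the preceding proposition. Starting from the energy inequality \eqref{e1} with all fields replaced by their $\varepsilon$-indexed versions, Gronwall's inequality provides uniform bounds independent of $\varepsilon$: $\rho_\varepsilon$ in $L^\infty(0,T;L^\gamma(\Omega)\cap L^\beta(\Omega))$, $u_\varepsilon-u_\infty$ in $L^2(0,T;H^1(\Omega))$, $\sqrt{\rho_\varepsilon}(u_\varepsilon-u_\infty)$ in $L^\infty(0,T;L^2(\Omega))$, $f_\varepsilon$ in $L^\infty(0,T;L^1\cap L^\infty(\Omega\times\mathbb{R}^3))$, $|v|^2 f_\varepsilon$ in $L^\infty(0,T;L^1(\Omega\times\mathbb{R}^3))$ and $\nabla\Phi_\varepsilon$ in $L^\infty(0,T;L^2(\Omega))$, together with the degenerate bounds $\sqrt{\varepsilon}\|\nabla^{2m+1}\Phi_\varepsilon\|_{L^\infty(0,T;L^2)}$, $\sqrt{\varepsilon}\|\nabla\rho_\varepsilon\|_{L^2((0,T)\times\Omega)}$ and $\varepsilon^{1/4}\|\nabla(u_\varepsilon-u_\infty)\|_{L^4((0,T)\times\Omega)}\leq C$. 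Lemma \ref{1'} upgrades these to uniform bounds on $\int f_\varepsilon\,dv$ in $L^\infty(0,T;L^{5/3}(\Omega))$ and $\int v f_\varepsilon\,dv$ in $L^\infty(0,T;L^{5/4}(\Omega))$, and elliptic regularity applied to \eqref{15} controls $\Phi_\varepsilon$ in $L^\infty(0,T;W^{2,5/3}(\Omega))$.

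The new ingredient compared with Subsection \ref{lN} is that the higher-order regularization $\varepsilon\Delta^{2m+1}\Phi$ and the artificial-viscosity corrections $\varepsilon\nabla\rho\cdot\nabla u$ and $\varepsilon\,\mathrm{div}(|\nabla(u-u_\infty)|^2\nabla(u-u_\infty))$ must be shown to vanish in distribution. For the first, testing \eqref{33} against an arbitrary $\Psi\in C_c^\infty((0,T)\times\Omega)$ gives
\begin{align*}
\Big|\varepsilon\int_0^T\!\!\int_\Omega\nabla^{2m+1}\Phi_\varepsilon\cdot\nabla^{2m+1}\Psi\,dxdt\Big|\leq\sqrt{\varepsilon}\,\bigl\|\sqrt{\varepsilon}\,\nabla^{2m+1}\Phi_\varepsilon\bigr\|_{L^2}\|\nabla^{2m+1}\Psi\|_{L^2}=O(\sqrt{\varepsilon}),
\end{align*}
which tends to zero, recovering the classical Poisson equation \eqref{55}. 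The two momentum correctors are disposed of identically using the $L^2$ and $L^4$ bounds listed above. Strong convergence of $u_\varepsilon$ in $L^2((0,T)\times\Omega)$ and of $\nabla\Phi_\varepsilon$ in $C([0,T];W^{1,q}(\Omega))$ for $q<15/4$, obtained by Aubin–Lions applied to \eqref{15} together with the time-derivative identity $\partial_t n_\varepsilon=-\mathrm{div}_x\!\int v f_\varepsilon\,dv$, allow me to pass to the limit in the drift terms $(u_\varepsilon-v)f_\varepsilon$ and $\nabla_x\Phi_\varepsilon\cdot\nabla_v f_\varepsilon$ exactly as in Proposition \ref{p2}.

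The principal obstacle is the strong $L^1$ compactness of $\rho_\varepsilon$ needed to pass to the limit in $\rho_\varepsilon^\gamma$, $\rho_\varepsilon^\beta$ and $\rho_\varepsilon u_\varepsilon\otimes u_\varepsilon$. This is the classical difficulty in the theory of the isentropic compressible Navier–Stokes system with non-homogeneous Dirichlet data, resolved via regularization of the continuity equation, the effective viscous flux identity and the analysis of the oscillation defect measure; for all of this I simply invoke the machinery of \cite{CJN}. The additional drag term $-\int(u_\varepsilon-v)f_\varepsilon\,dv$ on the right-hand side of the momentum equation is a lower-order, compact perturbation, controlled by \eqref{113} and \eqref{114} combined with the strong convergence of $u_\varepsilon$, and therefore does not disturb the compactness scheme of \cite{CJN}. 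Finally, the target energy inequality \eqref{e3} follows from \eqref{e1} by weak lower semicontinuity of the convex left-hand side terms, by discarding the non-negative $\varepsilon$-dependent dissipative terms, and by showing that the $\tfrac{1}{2}\rho^2$-type artefacts produced by the parabolic regularization of the continuity equation cancel on both sides once the limit $\rho$ is known to satisfy the renormalized continuity equation — another consequence of the same framework in \cite{CJN}.
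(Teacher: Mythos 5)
Your proposal follows essentially the same route as the paper: uniform bounds from the energy inequality \eqref{e1} via Gronwall, the $O(\sqrt{\varepsilon})$ estimate killing the $\varepsilon\Delta^{2m+1}\Phi_\varepsilon$ regularization, Aubin--Lions for $\Phi_\varepsilon$ as in Proposition \ref{p2}, and delegation of the strong compactness of $\rho_\varepsilon$ and the effective viscous flux argument to \cite{CJN}. The paper's own proof is in fact terser — it only records the new $\nabla^{2m+1}\Phi_\varepsilon$ estimate explicitly and cites \cite{CJN} and Subsection \ref{lN} for everything else — so your write-up is a correct and somewhat more detailed rendering of the same argument.
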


\subsection{Passing the limit as $\delta\rightarrow 0$}
In this section, we take the limit $\delta\rightarrow 0$ and relax our hypotheses on the initial and boundary data.
Performing the same arguments as those in Subsection \ref{le}, we can take the limit on $\delta$ in the weak formulation of kinetic system \eqref{54}-\eqref{55}.
And the details of taking limit in the weak formulation of fluid system \eqref{56}-\eqref{57} can be found in \cite{CJN}. We omit it for brevity.
Hence the proof of Theorem \ref{main} is completed.
\hfill\qedsymbol

\medskip
\indent
{\bf Acknowledgements:}
F. Li  and Y. Li are supported by NSFC (Grant No. 12071212). And F. Li is also supported by a project funded by the Priority Academic Program Development of Jiangsu Higher Education Institutions. NZ acknowledges support from the Alexander von Humboldt Foundation (AvH), from the Austrian Science Foundation (FWF) grant P30000, and from the bilateral Croatian-Austrian Project of the Austrian Agency for International Cooperation in Education and Research (ÖAD) grant HR 19/2020.

\end{document}